\newcolumntype{L}[1]{>{\raggedright\let\newline\\\arraybackslash\hspace{0pt}}m{#1}}
\newcolumntype{C}[1]{>{\centering\let\newline\\\arraybackslash\hspace{0pt}}m{#1}}
\newcolumntype{R}[1]{>{\raggedleft\let\newline\\\arraybackslash\hspace{0pt}}m{#1}}
\newtheoremstyle{theoremstyle}
{10pt}      
{5pt}       
{\itshape}  
{}          
{\bfseries} 
{}         
{\newline}      
{}          
\newtheoremstyle{algorithmstyle}
{10pt}      
{5pt}       
{}  
{}          
{\bfseries} 
{}         
{ }      
{}          
\newtheoremstyle{examplestyle}
{10pt}      
{5pt}       
{}          
{}          
{\bfseries} 
{}         
{\newline}      
{}          
\theoremstyle{theoremstyle}
\newtheorem{theorem}{Theorem}[section]
\newtheorem*{theorem*}{Theorem}
\newtheorem{lemma}[theorem]{Lemma}
\newtheorem{proposition}[theorem]{Proposition}
\newtheorem*{proposition*}{Proposition}
\newtheorem{corollary}[theorem]{Corollary}
\newtheorem*{corollary*}{Corollary}
\theoremstyle{examplestyle}
\newtheorem{example}[theorem]{Example}
\newtheorem{definition}[theorem]{Definition}
\newtheorem{definition*}{Definition}
\newtheorem{remark}[theorem]{Remark}
\newtheorem{remark*}{Remark}
\newtheorem{convention}[theorem]{Convention}
\newtheorem{timings}[theorem]{Timings}
\theoremstyle{algorithmstyle}
\newtheorem{algorithm}[theorem]{Algorithm}
\newcommand{\NN }{\mathbb{N}}
\newcommand{\CC }{\mathbb{C}}
\newcommand{\RR }{\mathbb{R}}
\newcommand{\QQ }{\mathbb{Q}}
\newcommand{\ZZ }{\mathbb{Z}}
\newcommand{\FF }{\mathbb{F}}
\newcommand{\suchthat}{\;\ifnum\currentgrouptype=16 \middle\fi|\;}
\DeclareMathOperator{\initial}{in}
\DeclareMathOperator{\Conv}{Conv}
\DeclareMathOperator{\Trop}{Trop}
\DeclareMathOperator{\trop}{trop}
\DeclareMathOperator{\Det}{Det}
\DeclareMathOperator{\Grass}{Grass}
\DeclareMathOperator{\codim}{codim}
\DeclareMathOperator{\Lin}{Lin}
\newcommand{\lang}[1]{}
\newcommand{\kurz}[1]{#1}
\newcommand{\bmath}{\kurz{\begin{math}}\lang{\begin{displaymath}} }
    \newcommand{\emath}{\kurz{\end{math}}\lang{\end{displaymath}} }
\newcommand{\facetstatsOne}{%
  \begin{pmatrix}
    2597&1073&904&349&155&6\\
    112&49&82&109&77&20\\
    0&0&0&0&10&0
  \end{pmatrix}
}
\newcommand{\facetstatsTwo}{%
  \begin{pmatrix}
    2650&1096&981&458&242&26\\
    59&26&5&0&0&0\\
    0&0&0&0&0&0
  \end{pmatrix}
}
\newcommand{\facetstatsThree}{%
  \begin{pmatrix}
    2695&1121&986&458&242&26\\
    14&1&0&0&0&0\\
    0&0&0&0&0&0
  \end{pmatrix}
}
\newcommand{\facetstatsFour}{%
  \begin{pmatrix}
    2565&1070&658&152&13&0\\
    144&43&328&288&175&26\\
    0&9&0&18&54&0
  \end{pmatrix}
}
\newcommand{\facetstatsFive}{%
  \begin{pmatrix}
    2682&1022&781&332&185&24\\
    27&100&205&126&57&2\\
    0&0&0&0&0&0
  \end{pmatrix}
}
\newcommand{\facetstatsSix}{%
  \begin{pmatrix}
    2411&771&581&231&100&0\\
    298&351&395&197&127&4\\
    0&0&10&30&15&22\\
  \end{pmatrix}
}
\newcommand{\facetstatsSeven}{%
  \begin{pmatrix}
    2680&1002&803&244&41&0\\
    29&120&183&206&152&26\\
    0&0&0&8&49&0
  \end{pmatrix}
}
\newcommand{\facetstatsEight}{%
  \begin{pmatrix}
    2674&1025&864&421&207&26\\
    35&97&122&37&35&0\\
    0&0&0&0&0&0
  \end{pmatrix}
}
\newcommand{\facetstatsNine}{%
  \begin{pmatrix}
    2708&1122&986&458&242&26\\
    0&0&0&0&0&0\\
    1&0&0&0&0&0
  \end{pmatrix}
}
\newcommand{\facetstatsTen}{%
  \begin{pmatrix}
    2666&1109&986&458&242&26\\
    43&13&0&0&0&0\\
    0&0&0&0&0&0
  \end{pmatrix}
}
\newcommand{\facetstatsEleven}{%
  \begin{pmatrix}
    2591&1087&925&360&167&22\\
    118&35&61&98&75&4\\
    0&0&0&0&0&0
  \end{pmatrix}
}
\newcommand{\facetstatsTwelve}{%
  \begin{pmatrix}
    2567&892&710&209&14&0\\
    142&226&275&239&179&26\\
    0&4&1&10&49&0
  \end{pmatrix}
}
\begin{document}

\title{Computing tropical points and tropical links}
\author{Tommy Hofmann}
\address{Technische Universit\"at Kaiserslautern\\
  Fachbereich Mathematik\\
  Postfach 3049\\
  67663 Kaiserslautern\\
  Germany
}
\email{thofmann@mathematik.uni-kl.de}
\urladdr{http://www.mathematik.uni-kl.de/\textasciitilde thofmann}
\author{Yue Ren}
\address{Max-Planck-Institut f\"ur Mathematik in den Naturwissenschaften\\
  Inselstr. 22\\
  04103 Leipzig\\
  Germany
}
\email{yue.ren@mis.mpg.de}
\urladdr{https://www.yueren.de}

\thanks{The second author was partially supported by the DFG Priority Programme 1489 ``Algorithmic and Experimental Methods in Algebra, Geometry and Number Theory'' and the Center of Advanced Studies in Mathematics of Ben-Gurion University. This work was completed during the program ``Tropical Geometry, Amoebas and Polytopes'' at the Institute Mittag-Leffler. The second author would like to thank the institute for its hospitality.}

\subjclass[2010]{14T05, 52B20, 12J25, 13P15}
\keywords{Tropical geometry; Tropical variety; Tropical Grassmannian; Computer algebra;  Newton polygon}
\date{August 2018}

\maketitle

\begin{abstract}
  We present an algorithm for computing zero-dimensional tropical varieties based on triangular decomposition and Newton polygon methods. From it, we derive algorithms for computing points on and links of higher-dimensional tropical varieties, using intersections with affine hyperplanes to reduce the dimension to zero. We use the algorithms to show that the tropical Grassmannians $\mathcal G_{3,8}$ and $\mathcal G_{4,8}$ are not simplicial.
\end{abstract}

\section{Introduction}

Given an affine variety $X$ over an algebraically closed field $K$ with non-trivial valuation, its tropical variety $\Trop(X)$ is the euclidean closure of its image under component-wise valuation. Tropical varieties arise naturally in many applications in mathematics \cite{ABGJ14,Mikhalkin05} and beyond, such as in the context of phylogenetic trees in biology \cite[Section 4]{PachterSturmfels05}, product-mix auctions in economics \cite{TranYu15} or finiteness of central configurations in the $5$-body problem in physics \cite{HamptonJensen11}.


Nevertheless, computing tropical varieties is an algorithmically challenging task, requiring sophisticated techniques from computer algebra and convex geometry. The first algorithms were developed by Bogart, Jensen, Speyer, Sturmfels and Thomas \cite{BJSST07} for the field of complex Puiseux series $\CC\{\!\{t\}\!\}$. More recently, Chan and Maclagan introduced a new notion of Gr\"obner bases for general fields with valuation in order to compute tropical varieties thereover~\cite{CM13}. Concurrently, Chan developed a special algorithm for computing tropical curves \cite[Chapter 4]{Chan13}. All these algorithms have been implemented in \textsc{gfan}~\cite{gfan}, which is the currently most widely used program for computing tropical varieties. In this article, we touch upon two problems that arise in the computation.

The first problem is to pinpoint a \textit{tropical starting point}, a first point on the tropical variety from which all further computations start off. At present, the default is to traverse the Gr\"obner complex randomly while checking all vertices along the way for containment in the tropical variety. This is a rather inefficient approach however, as there can be significantly more Gr\"obner polyhedra outside the tropical variety than inside \cite[Theorem 6.3]{BJSST07}. 
The second problem, which arises repeatedly, is to compute \textit{tropical links}, tropical varieties of simpler combinatorial structure which describe the original tropical variety locally. Their special structure allows them to be computed via tropical prevarieties. While this has proven to be successful for a wide range of examples, experiments show that with increasing input size the tropical prevariety computations become intractable.

We present a simple yet novel approach for solving the aforementioned problems, based on the following bread-and-butter techniques in computer algebra and number theory:
\begin{enumerate}[leftmargin=*]
\item intersection with random hyperplanes,
\item triangular decomposition of zero-dimensional polynomial ideals,
\item reading off valuations of roots from Newton polygons.
\end{enumerate}
Moreover, the algorithm for tropical links also relies on a generalization of the \emph{Transverse Intersection Lemma} \cite[Lemma 3.2]{BJSST07} to general fields with valuation, which follows from recent results by Osserman and Payne~\cite{OssermanPayne13}.

We use our algorithms to study some higher tropical Grassmannians $\mathcal G_{k,n}$. They were first studied by Speyer and Sturmfels~\cite{SpeyerSturmfels}, who showed that $\mathcal G_{2,n}$ for $n\geq 2$ and $\mathcal G_{3,6}$ are simplicial fans, the former using an intriguing connection to spaces of phylogenetic trees and the latter through explicit computation. Additionally, in their work on the parametrization and realizability of tropical planes~\cite{Herrmann2009}, Hermannn, Jensen, Joswig and Sturmfels showed that $\mathcal G_{3, 7}$ is also a simplicial fan. We will complement these findings by showing that this does not hold for $\mathcal G_{3,8}$ and~$\mathcal G_{4,8}$.

All algorithms presented in this article have been implemented in the \textsc{Singular} library \texttt{tropicalNewton.lib} \cite{singular,tropicalnewtonmethodslib}, and are publicly available as part of the official \textsc{Singular} distribution. For computations in convex geometry, it relies on an interface to \textsc{gfanlib} \cite{gfan,gfanlibso}.

\begin{convention}
  For the remainder of the article, let $K$ be an algebraically closed field with non-trivial valuation $\nu\!:K\!\rightarrow\RR\cup\{\infty\}$, though we will mainly focus on its restriction $\nu\!:K^\ast\!\rightarrow\RR$. We assume that $1\in\nu(K^\ast)$. As $K$ is algebraically closed, there exists a homomorphism $\psi:(\nu(K^\ast),+)\rightarrow (K^\ast,\cdot)$ with $\nu(\psi(w))=w$ \cite[Lemma 2.1.15]{MaclaganSturmfels}. We will fix one such $\psi$ and use $p^w$ to denote the element $\psi(w)\in K^\ast$, or $t^w$ if $K$ is the field of Puiseux series $\CC\{\!\{t\}\!\}$. Let $\mathfrak K$ denote the residue field of $K$.

  Furthermore, we fix a multivariate polynomial ring $K[x]:=K[x_1,\dots,x_n]$. By abuse of notation, we will also use $\nu$ to refer to the component-wise valuation $(K^\ast)^n\rightarrow\RR^n$.
\end{convention}

\subsubsection*{Acknowledgements}
  The authors would like to thank Michael Joswig and Benjamin Schr\"oter for their feedback on a previous version of the article.

\renewcommand{\emph}[1]{\textit{\textcolor{red}{#1}}}
\section{Computing zero-dimensional tropical varieties}

In this section we present an algorithm, Algorithm~\ref{alg:tropicalPoint0}, for computing zero-dimensional tropical varieties using triangular decomposition and Newton polygon methods. For the sake of simplicity, we restrict ourselves to the task of computing a single point on the tropical variety, as the structure of the algorithm easily suggests how the entire tropical variety can be computed with proper bookkeeping. We conclude the section by showing that any generic triangular set admits what we call a tree of unique Newton polygons, which is the best case for our algorithm as it allows us to compute its tropical variety purely combinatorially, see Example~\ref{ex:seriesOfNewtonPolygons}.

\begin{definition}
  Let $w\in\RR^n$. For a polynomial $f=\sum_{\alpha\in\NN^n} c_\alpha\cdot x^\alpha\in K[x]$, we define the evaluation of its tropicalization at $w$ to be
  \[ \trop(f)(w):=\min\{w\cdot \alpha+\nu(c_\alpha)\mid c_\alpha\neq 0 \}, \]
  and its \emph{initial form} with respect to $w$ to be
  \[ \initial_w(f)=\textstyle\sum_{w\cdot \alpha+\nu(c_\alpha)=\trop(f)(w)} \overline{c_\alpha p^{-\nu(c_\alpha)}}\cdot x^\alpha\in\mathfrak K[x]. \]
  For an ideal $I\unlhd K[x]$, we define its \emph{initial ideal} with respect to $w$ to be
  \[ \initial_w(I)=\langle \initial_w(f)\mid f\in I\rangle \unlhd \mathfrak K[x]. \]
  The \emph{tropical variety} of $I$ is then given by
  \begin{align*}
    \Trop(I) := &\{w\in\RR^n\mid \initial_w(I) \text{ monomial free}\}.
  \end{align*}
  For single polynomials $f\in K[x]$ and finite subsets $F\subseteq K[x]$, we abbreviate $\Trop(f):=\Trop(\langle f\rangle)$ and $\Trop(F):=\Trop(\langle F\rangle)$.

  The tropical variety is naturally covered by Gr\"obner polyhedra and hence the support of a subcomplex of the Gr\"obner complex \cite[Theorem 3.3.2]{MaclaganSturmfels}. Its \emph{dimension} resp. \emph{lineality space} is the dimension resp. lineality space of that subcomplex. 
\end{definition}

While the previous algorithms mainly work with the aforementioned definition of tropical varieties, the algorithms in this article focus on the following characterization:

\begin{theorem}[{\cite[Theorem 3.2.5]{MaclaganSturmfels}}]
  For any ideal $I\unlhd K[x]$ and its corresponding affine variety $X=V(I)\subseteq K^n$ we have
  \[\Trop(I) = \overline{\nu(X \cap (K^\ast)^n)},\]
  where $\overline{(\cdot)}$ denotes the closure in the euclidean topology.
\end{theorem}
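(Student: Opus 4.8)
This is the Fundamental Theorem of tropical algebraic geometry, and I would prove it through the two inclusions $\nu(X\cap(K^\ast)^n)\subseteq\Trop(I)$ and $\Trop(I)\subseteq\overline{\nu(X\cap(K^\ast)^n)}$, taking for granted the structural facts --- established beforehand via the Gr\"obner complex (finiteness of the set of distinct initial ideals $\initial_w(I)$ together with the compatibility of iterated degenerations) --- that $\Trop(I)$ is closed and is the support of a $\nu(K^\ast)$-rational polyhedral complex. One may also assume $I$ prime from the start, since $\Trop(I)=\bigcup_{\mathfrak p}\Trop(\mathfrak p)$ and $X=\bigcup_{\mathfrak p}V(\mathfrak p)$ over the minimal primes $\mathfrak p$ of $I$.

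For the first inclusion, let $a\in X\cap(K^\ast)^n$ and set $w:=\nu(a)$. Choosing $c_i\in K^\ast$ with $\nu(c_i)=w_i$, the reduction $\overline u:=\bigl(\overline{a_1c_1^{-1}},\dots,\overline{a_nc_n^{-1}}\bigr)$ lies in $(\mathfrak K^\ast)^n$. For every $f=\sum_\alpha c_\alpha x^\alpha\in I$, the identity $f(a)=0$ together with the ultrametric inequality forces the terms of minimal valuation $\nu(c_\alpha)+w\cdot\alpha=\trop(f)(w)$ to cancel after division by a scalar of that valuation and reduction modulo $\mathfrak m_K$; that cancellation is exactly the equation $\initial_w(f)(\overline u)=0$. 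Hence $\overline u$ is a common zero of $\initial_w(I)$ in the torus $(\mathfrak K^\ast)^n$, and as a monomial never vanishes on the torus, $\initial_w(I)$ is monomial-free, i.e.\ $w\in\Trop(I)$. Passing to euclidean closures and using that $\Trop(I)$ is closed gives $\overline{\nu(X\cap(K^\ast)^n)}\subseteq\Trop(I)$.

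The converse is the substantial direction. Let $w\in\Trop(I)$. Since $K$ is algebraically closed, $\nu(K^\ast)$ is divisible, hence dense in $\RR$, and since $\Trop(I)$ is a $\nu(K^\ast)$-rational polyhedral complex its points with all coordinates in $\nu(K^\ast)$ are dense in it; as the target of the inclusion is closed, it suffices to realise each such $w$ as an exact valuation $\nu(a)$ with $a\in V(I)\cap(K^\ast)^n$. Fixing $t\in(K^\ast)^n$ with $\nu(t)=w$, the monomial substitution $x_i\mapsto t_ix_i$ turns $I$ into an ideal $I'$ whose variety is the coordinatewise scaling $\{(t_1^{-1}b_1,\dots,t_n^{-1}b_n):b\in V(I)\}$ and whose initial ideal $\initial_0(I')$ is obtained from $\initial_w(I)$ by a coordinatewise rescaling, hence is still monomial-free; so we may assume $w=0$. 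Since $\mathfrak K$ is algebraically closed, $\initial_0(I)$ then has a zero $\overline a\in(\mathfrak K^\ast)^n$, and the crux is to \textit{lift} it to a point of $V(I)\cap(K^\ast)^n$ whose reduction modulo $\mathfrak m_K$ is $\overline a$. Here I would argue by flatness over the valuation ring: the $\mathcal O_K$-algebra $A:=\mathcal O_K[x]/(I\cap\mathcal O_K[x])$ is torsion-free, hence flat, over $\mathcal O_K$, with generic fibre $K[x]/I$ and --- by the standard identification of $\initial_0$ with reduction modulo $\mathfrak m_K$ --- special fibre $\mathfrak K[x]/\initial_0(I)$; flatness makes every irreducible component of $\operatorname{Spec}(A)$ dominate $\operatorname{Spec}(\mathcal O_K)$, so $\overline a$ lies in the closure of the generic fibre, and descending to a minimal prime of $A$ below the maximal ideal of $\overline a$ yields an $\mathcal O_K$-flat domain on whose fraction field $\nu$ extends. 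Because $K$ is algebraically closed with residue field $\mathfrak K$, one then extracts a $K$-rational point of $V(I)$ reducing to $\overline a$.

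The step I expect to be the real obstacle is precisely this lifting; the other ingredients are formal reductions or the one-line ultrametric computation above, whereas the surjectivity of component-wise valuation onto $\Trop(I)$ is the genuinely non-formal content. A more hands-on route --- and the one closest in spirit to the algorithms of the present paper --- is to first cut $V(I)$ down to a curve by intersecting with generically chosen affine hyperplanes, arranged so that $0$ still lies on the tropicalisation of the curve, and then to construct an explicit branch through $\overline a$ by a Newton--Puiseux expansion whose coordinate valuations are read off exactly as in Section~2; the general $w$ then follows by combining this with the density reduction. In either approach the two hypotheses one cannot dispense with are that $K$ be algebraically closed and that the Gr\"obner-complex structure theory supply closedness and $\nu(K^\ast)$-rationality of $\Trop(I)$.
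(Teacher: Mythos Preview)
The paper does not prove this theorem at all: it is quoted verbatim as \cite[Theorem 3.2.5]{MaclaganSturmfels} and used as a black box, with no accompanying proof or even a proof sketch. So there is nothing in the paper to compare your proposal against.

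For what it is worth, your outline is a faithful summary of the standard argument in Maclagan--Sturmfels: the easy inclusion via the ultrametric cancellation argument, the reduction to $w\in\nu(K^\ast)^n$ by density and rationality of the Gr\"obner complex, the monomial change of coordinates to $w=0$, and then the lifting of a torus point of $\initial_0(I)$ to $V(I)$ via flatness of the $\mathcal O_K$-model. Your identification of the lifting step as the substantive one is exactly right, and your alternative ``cut to a curve and run Newton--Puiseux'' route is indeed the one that morally underlies the algorithms in Section~2 of the present paper. But none of this appears in the paper itself; the authors simply invoke the result and move on.
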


We now describe how to exploit this geometric characterization algorithmically using triangular sets and Newton polygon methods.

\begin{definition}
  A set $F=\{f_1,\dots,f_n\}\subseteq K[x]$ is called \emph{triangular}, if for each $k=1,\dots,n$ we have $f_k\in K[x_1,\dots,x_k]$ of the form
  \[ f_k=c_k x_k^{d_k}+\text{terms of lower $x_k$-degree} \]
  for some $c_k\in K^\ast$ and $d_k\in\NN_{>0}$.
\end{definition}

\begin{proposition}[{\cite[Corollary 4.7.4]{GreuelPfister}}]\label{prop:laz}
  Let $I$ be a zero-dimensional ideal, then there exist triangular sets $F_1,\dots,F_s$ such that
  \[ \sqrt I = \textstyle\bigcap_{i=1}^s \sqrt{\langle F_i\rangle} \quad \text{and}\quad \langle F_i\rangle + \langle F_j \rangle = \langle 1 \rangle \text{ for }i\neq j.\]
\end{proposition}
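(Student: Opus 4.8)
The plan is to reduce the assertion to two standard ingredients: the structure of Artinian rings, and one structural lemma about maximal ideals of a polynomial ring. Since $I$ is zero-dimensional, $A:=K[x]/I$ is an Artinian ring, so it has only finitely many prime ideals, all of them maximal; pulling them back to $K[x]$ gives finitely many maximal ideals $\mathfrak m_1,\ldots,\mathfrak m_r\unlhd K[x]$ with $\sqrt I=\mathfrak m_1\cap\cdots\cap\mathfrak m_r$, and distinct maximal ideals are comaximal, so $\mathfrak m_i+\mathfrak m_j=\langle 1\rangle$ for $i\neq j$. It therefore suffices to show that every maximal ideal $\mathfrak m\unlhd K[x]$ is generated by a triangular set: choosing such a set $F_i$ for each $\mathfrak m_i$ (and taking $s=r$), property (1) holds because each $\langle F_i\rangle=\mathfrak m_i$ is already radical, and property (2) is exactly the comaximality just noted.

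The heart of the matter is then the claim that \textit{every maximal ideal $\mathfrak m$ of a polynomial ring $k[x_1,\ldots,x_n]$ over an arbitrary field $k$ is generated by a triangular set}, which I would prove by induction on $n$; the case $n=1$ is immediate since $k[x_1]$ is a principal ideal domain. For $n>1$, the Nullstellensatz gives that $k[x_1,\ldots,x_n]/\mathfrak m$ is a finite extension field of $k$, so the image of $x_1$ is algebraic over $k$ and $\mathfrak m\cap k[x_1]=\langle p_1\rangle$ for some monic irreducible $p_1\in k[x_1]\setminus k$. With $k_1:=k[x_1]/\langle p_1\rangle$, a field, the canonical isomorphism $k[x_1,\ldots,x_n]/\langle p_1\rangle\cong k_1[x_2,\ldots,x_n]$ carries $\mathfrak m$ to a maximal ideal of a polynomial ring in $n-1$ variables over $k_1$, which by the inductive hypothesis is generated by a triangular set $\{\bar q_2,\ldots,\bar q_n\}$, $\bar q_i\in k_1[x_2,\ldots,x_i]\setminus k_1[x_2,\ldots,x_{i-1}]$. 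Lifting each $\bar q_i$ to a polynomial $p_i\in k[x_1,\ldots,x_i]$ of the same $x_i$-degree, one checks that $\{p_1,\ldots,p_n\}$ is a triangular set contained in $\mathfrak m$ whose quotient ring is $k_1[x_2,\ldots,x_n]/\langle\bar q_2,\ldots,\bar q_n\rangle\cong k[x]/\mathfrak m$, a field; hence $\langle p_1,\ldots,p_n\rangle=\mathfrak m$. For the $K$ fixed in this article, which is algebraically closed, the lemma is in fact trivial, since $\mathfrak m=\langle x_1-a_1,\ldots,x_n-a_n\rangle$ is already triangular; the induction is only needed over a general base field, as in \cite{GreuelPfister}.

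I expect the only real subtlety to lie in the inductive step: checking that the lifts $p_i$ can be chosen so that $p_i$ genuinely involves $x_i$, so that $\{p_1,\ldots,p_n\}$ is honestly a triangular set, and that reduction modulo $p_1$ (and its inverse) preserves both the triangular shape and the equality with $\mathfrak m$ rather than with some merely $\mathfrak m$-primary ideal. Once the lemma is established, assembling the $F_i$ and verifying (1) and (2) is purely formal. A more computational alternative that avoids the lemma is Lazard's triangular-decomposition procedure --- compute a lexicographic Gr\"obner basis of $I$, factor its unique univariate generator in $K[x_1]$ into coprime factors, and recurse on the corresponding saturations --- but for a bare existence statement the Artinian-ring argument above is the most economical.
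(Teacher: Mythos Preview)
The paper does not prove this proposition at all: it is quoted as \cite[Corollary~4.7.4]{GreuelPfister}, and the only commentary is the subsequent remark that ``a primary decomposition always yields a triangular decomposition'' while pointing to Lazard's procedure and \cite[Algorithm~4.7.8]{GreuelPfister} for more efficient constructions. Your argument is correct and is precisely the route the remark alludes to---decompose $\sqrt I$ into its maximal components and observe that each maximal ideal is generated by a triangular set---so you have supplied the existence proof the paper omits; your closing paragraph about Lazard's lexicographic-Gr\"obner procedure is exactly the computational alternative the paper's remark has in mind.
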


\begin{remark}
  Triangular decompositions as in Proposition~\ref{prop:laz} were initially introduced by Lazard \cite{Lazard} for polynomial system solving. They are a weaker notion of a primary decomposition and can be obtained easier through various methods, see \cite[Procedure 1]{Lazard} or \cite[Algorithm~4.7.8]{GreuelPfister} for details.
\end{remark}



\begin{definition}
  For a univariate polynomial $f=\sum_{i=0}^d c_i \cdot x_k^i\in K[x_k]$, $c_i\in K$, the \emph{Newton polygon} or \emph{extended Newton polyhedron} is defined to be
  \[ \Delta(f):= \Conv(\{(i,\nu(c_i))\mid c_i\neq 0\})+(\{0\}\times \RR_{\geq 0}).\]
  Similarly, for a multivariate polynomial $f=\sum_{i=0}^d f_i\cdot x_k^{i}\in K[x_1,\dots,x_k]$, $f_i\in K[x_1,\dots,x_{k-1}]$, and a weight $w\in\RR^{k-1}$, we define the \emph{expected Newton polygon} of $f$ at $w$ to be
  \[ \Delta_w(f):=\Conv(\{(i,\trop(f_i)(w))\mid f_i\neq 0\})+(\{0\}\times \RR_{\geq 0}). \]
  We say $f$ has a \emph{unique Newton polygon} at $w$, if the initial form $\initial_w(f_i)$ is a monomial for all vertices $(i,\trop(f_i)(w)) \in \Delta_w(f)$.
  Let $\Lambda(f)$ resp. $\Lambda_w(f)$ denote the sets consisting of the negatives of the slopes of $\Delta(f)$ resp. $\Delta_w(f)$.
\end{definition}

The following two propositions justify the utility of Newton polygons and the term ``unique Newton polygon''.

\begin{proposition}[{\cite[Proposition II.6.3]{Neukirch}}]\label{prop:neukirch}
  Let $f$ be a univariate polynomial over $K$. Then $\Lambda(f)=\Trop(f)$.
\end{proposition}

\begin{proposition}\label{prop:uniqueNewtonPolygons}
  For a polynomial $f\in K[x_1,\dots,x_k]$ and a weight $w\in \RR^{k-1}$ the following are equivalent:
  \begin{samepage}
    \begin{enumerate}[leftmargin=*]
    \item $f$ has a unique Newton polygon at $w$,
    \item for all $z\in K^{k-1}$ with $\nu(z)=w$ we have $\Delta(f(z,x_k)) = \Delta_w(f)$.
    \end{enumerate}
  \end{samepage}
\end{proposition}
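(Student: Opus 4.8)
The plan is to reduce both implications to a single valuation-theoretic lemma about the coefficients $f_i$ in the expansion $f=\sum_{i=0}^{d}f_i\,x_k^{i}$ with $f_i\in K[x_1,\dots,x_{k-1}]$, and then to compare the two polygons vertex by vertex.

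The lemma is: for a nonzero $g\in K[x_1,\dots,x_{k-1}]$ and any $z\in(K^\ast)^{k-1}$ with $\nu(z)=w$ one has $\nu(g(z))\ge\trop(g)(w)$, with equality whenever $\initial_w(g)$ is a monomial, and --- if $\initial_w(g)$ is not a monomial --- with strict inequality for at least one such $z$. The inequality itself is the ultrametric inequality applied termwise. For the refinements, pick $\pi_1,\dots,\pi_{k-1},\pi_0\in K^\ast$ with $\nu(\pi_j)=w_j$ and $\nu(\pi_0)=\trop(g)(w)$ (possible, since $w$ and the coefficient valuations of $g$ lie in the value group, and $\pi_0$ can be taken to be a monomial in the $\pi_j$ times a coefficient of $g$). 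Then $\pi_0^{-1}\,g(\pi_1 x_1,\dots,\pi_{k-1}x_{k-1})$ has coefficients of nonnegative valuation, and its reduction modulo the maximal ideal is a polynomial $\widetilde g\in\mathfrak K[x_1,\dots,x_{k-1}]$ with the same support as $\initial_w(g)$ (differing only by unit scalars on the surviving coefficients). Writing $\zeta_j:=\overline{z_j/\pi_j}\in\mathfrak K^\ast$ for $z$ with $\nu(z)=w$, one gets $\overline{g(z)/\pi_0}=\widetilde g(\zeta_1,\dots,\zeta_{k-1})$. If $\initial_w(g)$ is a monomial, so is $\widetilde g$, hence this value is a unit and $\nu(g(z))=\trop(g)(w)$. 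If $\initial_w(g)$ has at least two terms, so does $\widetilde g$; since $\mathfrak K$ is algebraically closed, a non-monomial polynomial over it has a zero in the torus $(\mathfrak K^\ast)^{k-1}$ (specialise all but one variable generically and take a nonzero root in the last), so choosing $z$ whose residues $\zeta_j$ form such a zero gives $\overline{g(z)/\pi_0}=0$, i.e.\ $\nu(g(z))>\trop(g)(w)$.

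Granting the lemma, here is the comparison. The point $(d,\trop(f_d)(w))$ is always a vertex of $\Delta_w(f)$, being the unique defining point with largest first coordinate, and every other finite vertex of $\Delta_w(f)$ is a point $(i,\trop(f_i)(w))$ with $f_i\neq0$. For (1)$\Rightarrow$(2): by hypothesis $\initial_w(f_d)$ is a monomial, so the lemma gives $\nu(f_d(z))=\trop(f_d)(w)$ for every admissible $z$; hence $\deg_{x_k}f(z,x_k)=d$ and the two polygons share their rightmost point. Likewise $\nu(f_i(z))=\trop(f_i)(w)$ for every finite vertex $(i,\trop(f_i)(w))$ of $\Delta_w(f)$, so each such vertex is one of the points defining $\Delta(f(z,x_k))$; thus $\Delta_w(f)\subseteq\Delta(f(z,x_k))$. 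Conversely every defining point $(i,\nu(f_i(z)))$ of $\Delta(f(z,x_k))$ lies in $\Delta_w(f)$, since $\nu(f_i(z))\ge\trop(f_i)(w)$ for all $i$ and equality at $i=d$ keeps the rightmost point from being overshot; so $\Delta(f(z,x_k))=\Delta_w(f)$. For (2)$\Rightarrow$(1) I argue contrapositively: if a finite vertex $(i_0,\trop(f_{i_0})(w))$ of $\Delta_w(f)$ has $\initial_w(f_{i_0})$ not a monomial, the lemma supplies an admissible $z$ with $\nu(f_{i_0}(z))>\trop(f_{i_0})(w)$; every defining point $(i,\nu(f_i(z)))$ of $\Delta(f(z,x_k))$ then sits at height at least $\trop(f_i)(w)$ over $i$, strictly so over $i_0$, and a short convexity argument --- separating $(i_0,\trop(f_{i_0})(w))$ from the raised points by a supporting line of $\Delta_w(f)$ through that vertex --- shows $(i_0,\trop(f_{i_0})(w))\notin\Delta(f(z,x_k))$. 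Hence $\Delta(f(z,x_k))\neq\Delta_w(f)$, so (2) fails.

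The only genuinely non-routine step is the last clause of the lemma, producing an admissible $z$ that strictly raises $\nu(g(z))$ when $\initial_w(g)$ is not a monomial; this is precisely where algebraic closedness of the residue field $\mathfrak K$ is used, both to have enough points for the generic specialisation and to extract a nonzero root. Everything else is bookkeeping with the ultrametric inequality and the elementary geometry of the extended Newton polygon.
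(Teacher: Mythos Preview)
Your proof is correct and follows essentially the same approach as the paper's: both hinge on the ultrametric inequality $\nu(f_i(z))\ge\trop(f_i)(w)$, observe that equality is forced when $\initial_w(f_i)$ is a monomial, and use algebraic closedness of the residue field $\mathfrak K$ to produce a torus zero of $\initial_w(f_i)$ (and hence a strict inequality) when it is not. Your version is more detailed---you spell out the vertex-by-vertex polygon comparison and the convexity argument that the paper leaves implicit---but the underlying idea is identical.
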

\begin{proof}
  Note that for any coefficient $c\in K$, any substitute $z\in K^{k-1}$ with $\nu(z)=w$ and any exponent vector $\alpha\in\NN^{k-1}$ we have $\nu(c\cdot z^\alpha) = w \cdot \alpha + \nu(c)=\trop(c\cdot x^\alpha)(w)$. Hence for any $f_i\in K[x_1,\dots,x_{k-1}]$ we always have
  \[ \nu(f_i(z)) \geq \trop(f_i)(w), \]
  with equality guaranteed if $\initial_w(f_i)$ is a monomial, i.e. $(1)$ implies $(2)$.

  For the converse, it suffices to show that the equality is guaranteed only if $f_i$ is a monomial. Since $K$ is algebraically closed, so is its residue field $\mathfrak K$. In particular, if $\initial_w(f_i)$ is no monomial, then it has a non-zero root in $\mathfrak K^{k-1}$. Picking any $z\in K^{k-1}$ with $\nu(z)=w$ and $\initial_w(f)(\overline{z_1\cdot p^{-\nu(z_1)}},\dots,\overline{z_{k-1}\cdot p^{-\nu(z_{k-1})}})=0$, $p\in K$ denoting a uniformizing parameter, yields $\nu(f_i(z))\gneq \trop(f_i)(w)$.
\end{proof}

\begin{example}
  Let $K=\overline{\QQ}_2$ be the algebraic closure of the $2$-adic numbers. The polynomial $f=2^3x_3^2+(x_1-x_2)x_3+(x_1^2-2x_2)\in K[x]$ has a unique Newton polygon at all $(w_1,w_2)\in\RR^2$ with $w_1\neq w_2$ and $2w_1\neq w_2+1$:

  For instance, given $(z_1,z_2) \in K^2$ with $\nu_2(z_1,z_2)=(2,1)$, the Newton polygon $\Delta(f(z_1,z_2,x_3))$ will have vertices at $(0,2)$, $(1,1)$ and $(2,3)$.
  Using Proposition~\ref{prop:neukirch} we conclude that $\Trop(f(z_1,z_2,x_3))=\{0,1\}$ and hence $(2,1,0)$, $(2,1,1) \in \Trop(f)$.
  On the other hand, for $(z_1,z_2) \in K^2$ with $\nu_2(z_1,z_2)=(0,0)$, the Newton polygon $\Delta(f(z_1,z_2,x_3))$ may vary depending on the choice of $z_1,z_2$, as illustrated in Figure~\ref{fig:nonUniqueNewtonPolygon}.
\end{example}
\vspace{-0mm}
\begin{figure}[ht]
  \centering
  \begin{tikzpicture}[yscale=0.6]
    \fill[blue!20] (0,2.25) -- (0,0)-- (1,0) -- (2,1.5) -- (2,2.25) -- cycle;
    \draw (0,2.25) -- (0,0)-- (1,0) -- (2,1.5) -- (2,2.25);
    \fill (0,0) circle (0.6mm);
    \fill (1,0) circle (0.6mm);
    \fill (2,1.5) circle (0.6mm);
    \node[anchor=north,font=\scriptsize] at (0,0) {$(0,0)$};
    \node[anchor=north,font=\scriptsize] at (1,0) {$(1,0)$};
    \node[anchor=north west,xshift=-0.1cm,yshift=0.1cm,font=\scriptsize] at (2,1.5) {$(2,3)$};
    \node[font=\small,yshift=-2mm] at (1,-1) {$\Delta_{(0,0)}(f)$};

    \node (o0) at (-3,-0) {};
    \fill[blue!20] ($(-1,2.25)+(o0)$) -- ($(-1,0)+(o0)$) -- ($(0,0.5)+(o0)$) -- ($(1,1.5)+(o0)$) -- ($(1,2.25)+(o0)$) -- cycle;
    \draw ($(-1,2.25)+(o0)$) -- ($(-1,0)+(o0)$) -- ($(0,0.5)+(o0)$) -- ($(1,1.5)+(o0)$) -- ($(1,1.5)+(o0)$) -- ($(1,2.25)+(o0)$);
    \fill ($(-1,0)+(o0)$) circle (0.6mm);
    \fill ($(0,0.5)+(o0)$) circle (0.6mm);
    \fill ($(1,1.5)+(o0)$) circle (0.6mm);
    \node[anchor=north,font=\scriptsize] at ($(-1,0)+(o0)$) {$(0,0)$};
    \node[anchor=north west,xshift=-0.1cm,yshift=0.1cm,font=\scriptsize] at ($(0,0.5)+(o0)$) {$(1,1)$};
    \node[anchor=north west,xshift=-0.1cm,yshift=0.1cm,font=\scriptsize] at ($(1,1.5)+(o0)$) {$(2,3)$};
    \node[font=\small,yshift=-2mm] at ($(0,-1)+(o0)$) {$\Delta(f(1,3,x_3))$};

    \node (o1) at (5,-0) {};
    \fill[blue!20] ($(-1,2.25)+(o1)$) -- ($(-1,0)+(o1)$) -- ($(1,1.5)+(o1)$) -- ($(1,2.25)+(o1)$) -- cycle;
    \draw ($(-1,2.25)+(o1)$) -- ($(-1,0)+(o1)$) -- ($(1,1.5)+(o1)$) -- ($(1,1.5)+(o1)$) -- ($(1,2.25)+(o1)$);
    \fill ($(-1,0)+(o1)$) circle (0.6mm);
    \fill ($(0,1)+(o1)$) circle (0.6mm);
    \fill ($(1,1.5)+(o1)$) circle (0.6mm);
    \node[anchor=north,font=\scriptsize] at ($(-1,0)+(o1)$) {$(0,0)$};
    \node[anchor=south,font=\scriptsize] at ($(0,1)+(o1)$) {$(1,2)$};
    \node[anchor=north west,xshift=-0.1cm,yshift=0.1cm,font=\scriptsize] at ($(1,1.5)+(o1)$) {$(2,3)$};
    \node[font=\small,yshift=-2mm] at ($(0,-1)+(o1)$) {$\Delta(f(1,5,x_3))$};
  \end{tikzpicture}\vspace{-5mm}
  \caption{the expected and possible Newton polygons of $f$.}
  \label{fig:nonUniqueNewtonPolygon}
\end{figure}
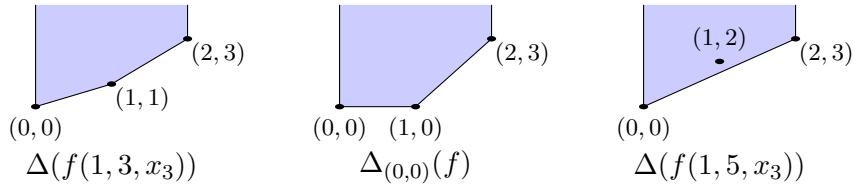
\vspace{-5mm}
\begin{algorithm}[tropical point, zero-dimensional case only]\label{alg:tropicalPoint0}\
  \begin{algorithmic}[1]
    \REQUIRE{$F=\{f_1,\dots,f_n\}\subseteq K[x]$ a triangular set with $V(F)\subseteq (K^\ast)^n$.}
    \ENSURE{$w\in \Trop(F)$.}
    \STATE Pick $w_1 \in \Lambda(f_1)$.
    \FOR{$i=2,\dots,n$}
    \IF{$f_i$ has a unique Newton polygon at $(w_1,\dots,w_{i-1})$}\label{step:newtonPolygon}
    \STATE Pick $w_i\in\Lambda_{(w_1,\dots,w_{i-1})}(f_i)$.
    \ELSE
    \STATE \label{step:rootComputation} Compute a root $(z_1,\dots,z_{i-1})\in V(f_1,\dots,f_{i-1})$.
    \STATE Pick $w_i\in\Lambda(f_i(z_1,\dots,z_{i-1},x_i))$.
    \ENDIF
    \ENDFOR
    \RETURN{$(w_1,\dots,w_n)$}
  \end{algorithmic}
\end{algorithm}
\begin{proof}
  The termination of the algorithm is clear and the correctness follows directly from Propositions~\ref{prop:neukirch} and \ref{prop:uniqueNewtonPolygons}. 
\end{proof}

While Algorithm~\ref{alg:tropicalPoint0} looks straightforward, performing Step~\ref{step:rootComputation} is a rather delicate task, which we will address in Examples~\ref{ex:rootApproximation} and \ref{ex:fieldExtension}. Example~\ref{ex:seriesOfNewtonPolygons} shows how Algorithm~\ref{alg:tropicalPoint0} can be used to compute the entire tropical variety.


\begin{example}[root approximation]\label{ex:rootApproximation}
  Note that, in Step~\ref{step:rootComputation} of Algorithm \ref{alg:tropicalPoint0}, it always suffices to approximate the root with respect to the metric induced by the valuation.
  For instance, consider the triangular set $F = \{f_1,f_2,f_3\} \subseteq \overline{\QQ}_3[x_1,x_2,x_3]$ with
  \[f_1 = x_1^2 + 3x_1 - 1, \quad f_2 = x_2^2 + 9x_2 - 1, \quad f_3 = 3x_3^2 + (x_1 - x_2)x_3 + 1.\]
  From the Newton polygons of $f_1$ and $f_2$ we see that elements $(z_1,z_2) \in (\overline{\QQ}_3)^2$ with $f_1(z_1) = f_2(z_1,z_2) = 0$ must satisfy $\nu_3(z_1,z_2) = (0,0)$.
  However, $f_3$ does not have a unique Newton polygon at $(0,0)$ and $\Delta(f_3(z_1,z_2,x_3))$ may vary depending on $z_1$ and $z_2$.
  More precisely, we have
  \[ \Delta(f_3(z_1,z_2,x_3)) = \begin{cases}
      \begin{tikzpicture}[scale=0.8]
      \fill[blue!20] (0,1.25) -- (0,0)-- (1,0) -- (2,1.0) -- (2,1.25) -- cycle;
      \draw (0,1.25) -- (0,0) -- node[above,font=\tiny] {$0$} (1,0) -- node[above,font=\tiny] {$1$} (2,1) -- (2,1.25);
      \fill (0,0) circle (0.6mm);
      \fill (1,0) circle (0.6mm);
      \fill (2,1.0) circle (0.6mm);
      \node[anchor=north,font=\tiny] at (0,0) {$(0,0)$};
      \node[anchor=north,font=\tiny] at (1,0) {$(1,0)$};
      \node[anchor=north west,xshift=-0.1cm,yshift=0.1cm,font=\tiny] at (2,1.0) {$(2,1)$};
      \node[anchor=west] at (3.5,0.5) {if $\nu_3(z_1 - z_2) = 0$,};
    \end{tikzpicture} \\
    \begin{tikzpicture}[scale=0.8]
      \fill[blue!20] (0,1.25) -- (0,0)-- (2,1.0) -- (2,1.25) -- cycle;
      \draw (0,1.25) -- (0,0) -- node[above,font=\tiny] {$\frac{1}{2}$} (2,1) -- (2,1.25);
      \fill (0,0) circle (0.6mm);
      \fill (2,1.0) circle (0.6mm);
      \node[anchor=north,font=\tiny] at (0,0) {$(0,0)$};
      \node[anchor=north west,xshift=-0.1cm,yshift=0.1cm,font=\tiny] at (2,1.0) {$(2,1)$};
      \node[anchor=west] at (3.5,0.5) {if $\nu_3(z_1 - z_2) > 0$.};
    \end{tikzpicture}\vspace{-4mm}
    \end{cases}
  \]
  Through Hensel Lifting we see that $f_1$ has a root $z_1 \in \ZZ_3$ with $z_1 \equiv 4 \bmod 3^2\ZZ_3$ and $f_2(z_1,x_2)$ has a root $z_2 \in \ZZ_3$ with $z_2 \equiv 1 \bmod 3^2\ZZ_3$.
  Since $z_1 - z_2\neq 0$ and $z_1 - z_2 \in 3 \ZZ_3$, we are in the second case and conclude that $(0, 0, -\frac 1 2) \in \Trop(F)$.
\end{example}

\begin{example}[field extensions]\label{ex:fieldExtension}
  While we began this article by fixing an algebraically closed field $K$, in practise we are always working over a finite extension of either the rationals $\QQ$, a finite field $\FF_q$ or function fields thereon. This can be problematic in conjunction with Step~\ref{step:rootComputation}, as approximating roots might require further field extensions. By the recursive nature of the algorithm, we potentially end up with a tower of field extensions. For instance, consider the triangular set $F = \{f_1,\dotsc,f_n\} \subseteq \QQ(\!(t)\!)[x_1,\dots,x_n]$ given by
  \[ f_k = x_k^2 - q_k t\Big( \sum_{i=1}^{k-1}x_i \Big) + q_k t^2, \text{ where }q_k\in\NN\text{ is the $k$-th prime}. \]
  This triangular set will never encounter a unique Newton polygon in Step~\ref{step:newtonPolygon}, and every root computation in Step~\ref{step:rootComputation} will require a new degree $2$ extension, as $V(f_1,\dotsc,f_k) \subseteq (\QQ(\sqrt {q_1},\dotsc, \sqrt{q_k})\{\!\{t\}\!\})^n \setminus(\QQ(\sqrt {q_1},\dotsc, \sqrt{q_{k-1}})\{\!\{t\}\!\})^n$. This eventually leads to a degree $2^n$ extension of $\QQ$, which shows in the performance of our implementation of Algorithm~\ref{alg:tropicalPoint0} in \texttt{tropicalNewton.lib}: computing the tropicalization for $n=13$ requires $8$ seconds and it roughly doubles with each increment of $n$. See Timings~\ref{ex:fieldExtensionCont} for a comparison with other algorithms.

\end{example}

\begin{example}[computing entire tropical varieties]\label{ex:seriesOfNewtonPolygons}
  As mentioned in the beginning of the section, Algorithm~\ref{alg:tropicalPoint0} can be used to compute entire tropical varieties of zero-dimensional ideals. This is done by computing a triangular decomposition as in Proposition~\ref{prop:laz} and applying the algorithm to each triangular set, while exhausting all in Steps $4$ and $6$-$7$.
  For instance, consider the triangular set $F = \{f_1, f_2, f_3\} \subseteq \CC\{\!\{t\}\!\}[x_1,x_2,x_3]$ with
  \begin{align*}
    f_1 =  tx_1^2+x_1+1, \quad f_2 = tx_2^2+x_1x_2+1, \quad f_3 = x_3+x_1x_2.
  \end{align*}
  Then $F$ admits several choices for slopes throughout the algorithm, and each choice in turn induces a new unique Newton polygon as illustrated in Figure~\ref{fig:seriesOfNewtonPolygons}.
  Keeping track of all of them, allows us to reconstruct its entire tropical variety:
  \[ \Trop(F) = \{ (0, 0, 0), (0, -1, -1), (-1, 1, 0), (-1, -2, -3)\}. \]
\end{example}
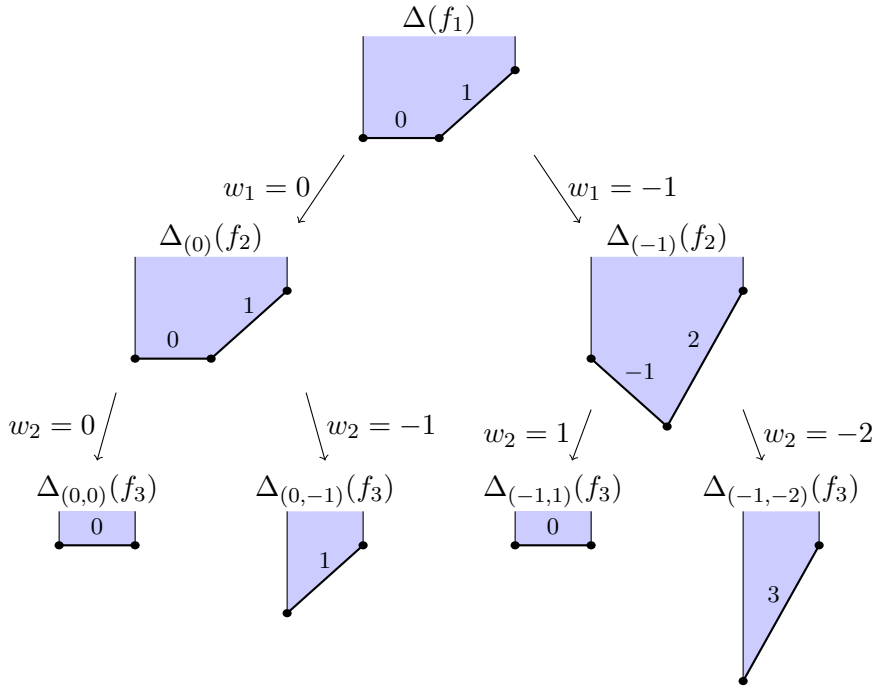
\begin{figure}[ht]
  \centering
  \begin{tikzpicture}[yscale=0.9]
    \fill[blue!20] (-1,1.5) -- (-1,0) -- (0,0) -- (1,1) -- (1,1.5) -- cycle;
    \draw (-1,1.5) -- (-1,0)
    (1,1) -- (1,1.5);
    \draw[thick,black]
    (-1,0) -- node[above,black,font=\scriptsize] {$0$} (0,0);
    \draw[thick,black]
    (0,0) -- node[anchor=south east,xshift=0.1cm,yshift=-0.1cm,black,font=\scriptsize] {$1$} (1,1);
    \fill (-1,0) circle (0.6mm);
    \fill (0,0) circle (0.6mm);
    \fill (1,1) circle (0.6mm);
    \node[font=\small] at (0,1.75) {$\Delta(f_1)$};

    \draw[->,black] (-1.25,-0.25) -- node[left,black,font=\small] {$w_1=0$} ++(-0.6,-1);
    \draw[->,black] (1.25,-0.25) -- node[right,black,font=\small] {$w_1=-1$} ++(0.6,-1);

    \node (o11) at (-3,-3.25) {};
    \fill[blue!20] ($(-1,1.5)+(o11)$) -- ($(-1,0)+(o11)$) -- ($(0,0)+(o11)$) -- ($(1,1)+(o11)$) -- ($(1,1.5)+(o11)$) -- cycle;
    \draw ($(-1,1.5)+(o11)$) -- ($(-1,0)+(o11)$)
    ($(1,1)+(o11)$) -- ($(1,1.5)+(o11)$);
    \draw[thick,black]
    ($(-1,0)+(o11)$) -- node[above,black,font=\scriptsize] {$0$} ($(0,0)+(o11)$);
    \draw[thick,black]
    ($(0,0)+(o11)$) -- node[above,black,font=\scriptsize] {$1$} ($(1,1)+(o11)$);
    \fill ($(-1,0)+(o11)$) circle (0.6mm);
    \fill ($(0,0)+(o11)$) circle (0.6mm);
    \fill ($(1,1)+(o11)$) circle (0.6mm);
    \node[font=\small] at ($(0,1.75)+(o11)$) {$\Delta_{(0)}(f_2)$};

    \draw[->,black] ($(-1.25,-0.5)+(o11)$) -- node[left,black,font=\small] {$w_2=0$} ++(-0.25,-1);
    \draw[->,black] ($(1.25,-0.5)+(o11)$) -- node[right,black,font=\small] {$w_2=-1$} ++(0.25,-1);

    \node (o12) at (3,-3.25) {};
    \fill[blue!20] ($(-1,1.5)+(o12)$) -- ($(-1,0)+(o12)$) -- ($(0,-1) + (o12)$) -- ($(1,1)+(o12)$) -- ($(1,1.5)+(o12)$) -- cycle;
    \draw ($(-1,1.5)+(o12)$) -- ($(-1,0)+(o12)$)
    ($(1,1)+(o12)$) -- ($(1,1.5)+(o12)$);
    \draw[thick,black]
    ($(-1,0)+(o12)$) -- node[above,black,font=\scriptsize,xshift=4pt] {$-1$} ($(0,-1)+(o12)$) -- node[above,black,font=\scriptsize,xshift=-1.5mm] {$2$} ($(1,1)+(o12)$);

    \fill ($(-1,0)+(o12)$) circle (0.6mm);
    \fill ($(0,-1)+(o12)$) circle (0.6mm);
    \fill ($(1,1)+(o12)$) circle (0.6mm);
    \node[font=\small] at ($(0,1.75)+(o12)$) {$\Delta_{(-1)}(f_2)$};

    \draw[->,black] ($(-1,-0.75)+(o12)$) --  node[left,black,font=\small] {$w_2=1$} ++(-0.25,-0.75);
    \draw[->,black] ($(1,-0.75)+(o12)$) --  node[right,black,font=\small] {$w_2=-2$} ++(0.25,-0.75);

    \node (o21) at ($(o11)+(-2,-2.75)$) {};
    \fill[blue!20] ($(0,0.5)+(o21)$) -- ($(0,0)+(o21)$) -- ($(1,0)+(o21)$) -- ($(1,0.5)+(o21)$) -- cycle;
    \draw ($(0,0.5)+(o21)$) -- ($(0,0)+(o21)$)
    ($(1,0)+(o21)$) -- ($(1,0.5)+(o21)$);
    \draw[thick,black]
    ($(0,0)+(o21)$) -- node[above,black,font=\scriptsize] {$0$} ($(1,0)+(o21)$);
    \fill ($(0,0)+(o21)$) circle (0.6mm);
    \fill ($(1,0)+(o21)$) circle (0.6mm);
    \node[font=\small] at ($(0.5,0.8)+(o21)$) {$\Delta_{(0,0)}(f_3)$};

    \node (o22) at ($(o11)+(1,-2.75)$) {};
    \fill[blue!20] ($(0,0.5)+(o22)$) -- ($(0,-1)+(o22)$) -- ($(1,0)+(o22)$) -- ($(1,0.5)+(o22)$) -- cycle;
    \draw ($(0,0.5)+(o22)$) -- ($(0,-1)+(o22)$)
    ($(1,0)+(o22)$) -- ($(1,0.5)+(o22)$);
    \draw[thick,black]
    ($(0,-1)+(o22)$) -- node[above,black,font=\scriptsize] {$1$} ($(1,0)+(o22)$);
    \fill ($(0,-1)+(o22)$) circle (0.6mm);
    \fill ($(1,0)+(o22)$) circle (0.6mm);
    \node[font=\small] at ($(0.5,0.8)+(o22)$) {$\Delta_{(0,-1)}(f_3)$};

    \node (o23) at ($(o12)+(-2,-2.75)$) {};
    \fill[blue!20] ($(0,0.5)+(o23)$) -- ($(0,0)+(o23)$) -- ($(1,0)+(o23)$) -- ($(1,0.5)+(o23)$) -- cycle;
    \draw ($(0,0.5)+(o23)$) -- ($(0,0)+(o23)$)
    ($(1,0)+(o23)$) -- ($(1,0.5)+(o23)$);
    \draw[thick,black]
    ($(0,0)+(o23)$) -- node[above,black,font=\scriptsize] {$0$} ($(1,0)+(o23)$);
    \fill ($(0,0)+(o23)$) circle (0.6mm);
    \fill ($(1,0)+(o23)$) circle (0.6mm);
    \node[font=\small] at ($(0.5,0.8)+(o23)$) {$\Delta_{(-1,1)}(f_3)$};

    \node (o24) at ($(o12)+(1,-2.75)$) {};
    \fill[blue!20] ($(0,0.5)+(o24)$) -- ($(0,-2)+(o24)$) -- ($(1,0)+(o24)$) -- ($(1,0.5)+(o24)$) -- cycle;
    \draw ($(0,0.5)+(o24)$) -- ($(0,-2)+(o24)$)
    ($(1,0)+(o24)$) -- ($(1,0.5)+(o24)$);
    \draw[thick,black]
    ($(0,-2)+(o24)$) -- node[above,black,xshift=-1mm,font=\scriptsize] {$3$} ($(1,0)+(o24)$);
    \fill ($(0,-2)+(o24)$) circle (0.6mm);
    \fill ($(1,0)+(o24)$) circle (0.6mm);
    \node[font=\small] at ($(0.5,0.8)+(o24)$) {$\Delta_{(-1,-2)}(f_3)$};
  \end{tikzpicture}\vspace{-5mm}
  \caption{a tree of unique Newton polygons.}
  \label{fig:seriesOfNewtonPolygons}
\end{figure}

We conclude this section by showing that any generic triangular set resembles Example~\ref{ex:seriesOfNewtonPolygons} in the sense that its tropical variety is determined by a tree of unique Newton polygons.

\begin{definition}
  We say a triangular set $F=\{f_1,\dots,f_n\}\subseteq K[x]$ admits a \emph{tree of unique Newton polygons}, if for all $k=1,\dots,n$ and all weights $w=(w_1,\dots,w_{k-1})\in \RR^{k-1}$ with $w_i \in \Lambda_{(w_1,\dots,w_{i-1})}(f_i)$, $i=1,\dots,k-1$, the polynomial $f_k$ has a unique Newton polygon at $(w_1,\dots,w_{k-1})$.
\end{definition}

\begin{lemma}\label{lem:tropical}
  Consider $w\in \RR^{k-1}$ and $f\in K[x_1,\dots,x_k]\subseteq K[x_1,\dots,x_n]$ such that $(\{w\}\times\RR^{n-k+1}) \cap \Trop(f)$ has codimension $k$. Then $f$ has a unique Newton polygon at $w$ and
  \[ (\{w\}\times\RR^{n-k+1}) \cap \Trop(f) = \{w\}\times\bigcup_{\tilde w\in\Lambda_w(f)} \{\tilde w\}\times\RR^{n-k}.\]
\end{lemma}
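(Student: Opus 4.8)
The plan is to work with the combinatorial description of the tropical hypersurface of a principal ideal and to analyse the restriction of the tropical polynomial of $f$ to the line $\{w\}\times\RR$ in the $x_k$-direction.

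First I would record that, since $\mathfrak K[x]$ is a domain, $\initial_w$ is multiplicative, so $\initial_w(\langle f\rangle)=\langle\initial_w(f)\rangle$; this ideal is monomial-free precisely when $\initial_w(f)$ is not a monomial, hence $\Trop(f)=\{v\in\RR^n:\initial_v(f)\text{ is not a monomial}\}$. As $f\in K[x_1,\dots,x_k]$, writing $T:=\{v'\in\RR^k:\initial_{v'}(f)\text{ is not a monomial}\}$ gives $\Trop(f)=T\times\RR^{n-k}$, and the hypothesis that $(\{w\}\times\RR^{n-k+1})\cap\Trop(f)$ have codimension $k$ becomes the statement that $X:=\{v_k\in\RR:(w,v_k)\in T\}$ is finite.

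The core step is the computation of $\initial_{(w,v_k)}(f)$. Writing $f=\sum_{i=0}^{d}f_i x_k^{i}$ with $f_i\in K[x_1,\dots,x_{k-1}]$, setting $b_i:=\trop(f_i)(w)\in\RR\cup\{\infty\}$ and $\phi(v_k):=\min_i\{b_i+iv_k\}$, one checks (no cancellation occurs, as $\mathfrak K$ is a field) that
\[\initial_{(w,v_k)}(f)=\textstyle\sum_{i:\,b_i+iv_k=\phi(v_k)}\initial_w(f_i)\cdot x_k^{i}.\]
So $(w,v_k)\in T$ if and only if either (a) the minimum defining $\phi(v_k)$ is attained by at least two indices, or (b) it is attained by a unique index $i_\ast$ but $\initial_w(f_{i_\ast})$ is not a monomial; thus $X=X_a\cup X_b$ with $X_a,X_b$ the corresponding sets of $v_k$. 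Now $X_a$ is the set of breakpoints of the concave piecewise-linear function $\phi$, which by the standard duality between these breakpoints and the edges of $\Delta_w(f)$ (underlying Lemma~\ref{lem:neukirch}) equals the finite set $\Lambda_w(f)$. For $X_b$: an index $i$ is the unique minimiser of $\phi$ on some nonempty open interval exactly when $(i,b_i)$ is a finite vertex of $\Delta_w(f)$, and that interval then lies in $X_b$ as soon as $\initial_w(f_i)$ is not a monomial. Hence $X_b=\emptyset$ precisely when every vertex $(i,\trop(f_i)(w))$ of $\Delta_w(f)$ has monomial initial form, i.e.\ precisely when $f$ has a unique Newton polygon at $w$; otherwise $X_b$, and therefore $X$, contains an open interval.

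Assembling: finiteness of $X$ forces $X_b=\emptyset$, so $f$ has a unique Newton polygon at $w$ and $X=X_a=\Lambda_w(f)$. Re-inflating the cylinder,
\[(\{w\}\times\RR^{n-k+1})\cap\Trop(f)=\{w\}\times\Lambda_w(f)\times\RR^{n-k}=\{w\}\times\bigcup_{w_k\in\Lambda_w(f)}\{w_k\}\times\RR^{n-k},\]
which is the asserted equality. The crux is the displayed identity for $\initial_{(w,v_k)}(f)$ together with the accompanying dichotomy — this is where the layered structure of $f$ in $x_k$ enters — while the remaining care goes into degenerate configurations: indices $i$ with $f_i=0$ (in particular the auxiliary vertex $(0,\infty)$ of $\Delta_w(f)$, which carries no monomiality condition and produces no breakpoint), and the case where $f$ is a monomial in $x_k$, where $\Lambda_w(f)=\emptyset$ and $X\in\{\emptyset,\RR\}$ — all of which are consistent with the statement but need a word.
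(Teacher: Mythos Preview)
Your argument is correct and follows essentially the same route as the paper: both hinge on the identity $\initial_{(w,v_k)}(f)=\sum_{i:\,b_i+iv_k\text{ minimal}}\initial_w(f_i)\,x_k^{i}$, then use it to show that a non-monomial initial form at a vertex of $\Delta_w(f)$ forces an interval of $v_k$ inside $\Trop(f)$, contradicting the codimension hypothesis, and that otherwise the intersection is exactly the set of negated edge slopes. Your packaging via the dichotomy $X=X_a\cup X_b$ is a little more explicit than the paper's separate treatment of the uniqueness claim and the two inclusions, but the content is the same.
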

\begin{proof}
  Without loss of generality, assume that $k=n$.
  Suppose $f=\sum_{i=0}^d f_i\cdot x_k^i$ with $f_i\in K[x_1,\dots,x_{k-1}]$ and assume that $f$ has no unique Newton polygon at $w$, i.e., that there exists a vertex $(i,\trop(f_i)(w))\in\Delta_w(f)$ such that $\initial_{w}(f_i)$ is no monomial. Let $\mu_0$ and $\mu_1$ be the negated slopes of the edges after and before the vertex respectively, see Figure~\ref{fig:nonmonomialVertex}.
  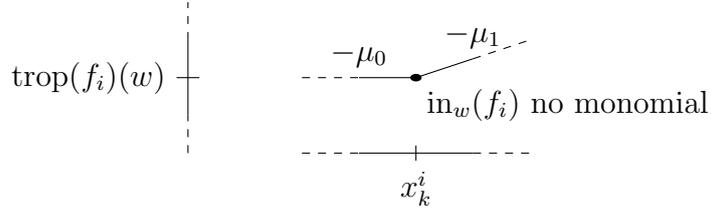
\begin{figure}[ht]
    \centering
    \begin{tikzpicture}[xscale=1.5]
      \draw[dashed] (1,1) -- (1.5,1);
      \draw (1.5,1) -- (2,1);
      \draw (2,1) -- (2.5,1.25);
      \draw[dashed] (2.5,1.25) -- (3,1.5);
      \fill[black] (2,1) circle (0.5mm);

      \draw (1.5,0) -- (2.5,0);
      \draw[dashed] (1,0) -- (1.5,0);
      \draw[dashed] (2.5,0) -- (3,0);
      \draw (2,0.1) -- (2,-0.1);
      \node[anchor=north] at (2,-0.1) {$x_k^i$};

      \draw (0,0.5) -- (0,1.5);
      \draw[dashed] (0,0) -- (0,0.5);
      \draw[dashed] (0,1.5) -- (0,2);
      \draw (0.1,1) -- (-0.1,1);
      \node[anchor=east] at (-0.1,1) {$\trop(f_i)(w)$};

      \node[anchor=south] at (1.5,1) {$-\mu_0$};
      \node[anchor=south] at (2.5,1.25) {$-\mu_1$};
      \node[anchor=north west] at (2,1) {$\initial_{w}(f_i)$ no monomial};
    \end{tikzpicture}\vspace{-5mm}
    \caption{$\Delta_w(f)$ around a non-monomial vertex}
    \label{fig:nonmonomialVertex}
  \end{figure}

  Then, for any $w_k\in(\mu_0,\mu_1)$, we have $\initial_{(w,w_k)}(f)=\initial_{w}(f_i)\cdot x_k^i$, which is no monomial. This implies $\{w\}\times(\mu_1,\mu_0)\subseteq\Trop(f)$, contradicting the zero-dimensionality of $\Trop(f)$.

  Next, we show the equality. For the ``$\supseteq$'' inclusion, let $\mu$ be a slope of an edge of $\Delta_w(f)$, say connecting the two vertices $v_0$ and $v_1$. Then, writing $e(v_0,v_1)$ for the edge connecting $v_0$ and $v_1$,
  \[ \initial_{(w,\mu)}(f) = \textstyle\sum_{(i,\trop(f_i)(w))\in e(v_0,v_1)}\initial_{w}(f_i)\cdot x_k^i.\]

  For the converse inclusion, let $(w,w_k)\in\Trop(f)$. It is clear, that for some bounded proper face $e\leq\Delta_{w}(f)$,
  \[ \initial_{(w,w_k)}(f) = \textstyle\sum_{(i,\trop(f_i)(w))\in e}\initial_{w}(f_i)\cdot x_k^i.\]
  Note that $e$ cannot be zero-dimensional, as otherwise $\initial_{(w,w_k)}(f)=\initial_{(w,w_k')}(f)$ for all $w_k'\in\RR$, contradicting the zero-dimensionality of $\Trop(f)$. Hence, $e$ has to be an edge and, consequently, $w_k$ is the slope of $e$.
\end{proof}

\begin{proposition}\label{prop:uniqueNewtonPolygons2}
  For a triangular set $F=\{f_1,\dots,f_n\}\subseteq K[x]$ the following are equivalent:
  \begin{enumerate}
  \item $\dim \bigcap_{i=1}^k \Trop(f_i) = n-k$ for all $k=1,\dots,n$,
  \item $F$ is a tropical basis.
  \end{enumerate}
  Moreover, if $F$ is a tropical basis, then it admits a tree of unique Newton polygons.
\end{proposition}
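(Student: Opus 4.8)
Throughout, write $Q_k\subseteq\RR^k$ for the tropical prevariety $\bigcap_{i=1}^k\Trop(f_i)$ formed inside $\RR^k$; since $f_1,\dots,f_k\in K[x_1,\dots,x_k]$ one has $\bigcap_{i=1}^k\Trop(f_i)=Q_k\times\RR^{n-k}$ in $\RR^n$, so condition~(1) is exactly the statement that $\dim Q_k=0$ for all $k$. The engine of the proof is a reformulation of Lemma~\ref{lem:tropical}: for $v\in Q_{k-1}$ the fibre $\{w_k\in\RR:(v,w_k)\in Q_k\}$ coincides with $\{w_k:(v,w_k)\in\Trop(f_k)\}$ (since $v$ already lies on the first $k-1$ hypersurfaces), and by Lemma~\ref{lem:tropical} this fibre is finite exactly when $f_k$ has a unique Newton polygon at $v$, in which case it equals $\Lambda_v(f_k)$; otherwise a vertex of $\Delta_v(f_k)$ with non-monomial initial form makes $\initial_{(v,w_k)}(f_k)$ a non-monomial for all $w_k$ in an open interval, so the fibre contains an interval. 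I will also freely use the always-valid inclusion $\Trop(\langle F\rangle)\subseteq\bigcap_i\Trop(f_i)$ and the geometric description $\Trop(I)=\overline{\nu(V(I)\cap(K^\ast)^n)}$.

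\textbf{$(1)\Rightarrow(2)$.} By~(1) the set $Q_n=\bigcap_i\Trop(f_i)$ is finite, so it suffices to show that every $w=(w_1,\dots,w_n)\in Q_n$ lies in $\Trop(\langle F\rangle)$; together with the always-valid reverse inclusion this gives the tropical basis property. I would lift $w$ to a point $(z_1,\dots,z_n)\in V(F)\cap(K^\ast)^n$ with $\nu(z)=w$ one coordinate at a time: given $z_1,\dots,z_j$ with $\nu(z_i)=w_i$ and $f_i(z_1,\dots,z_i)=0$ for $i\le j$, the fibre of $Q_{j+1}$ over $(w_1,\dots,w_j)\in Q_j$ is finite and contains $w_{j+1}$, so by the reformulated lemma $f_{j+1}$ has a unique Newton polygon at $(w_1,\dots,w_j)$ and $w_{j+1}\in\Lambda_{(w_1,\dots,w_j)}(f_{j+1})$; then Proposition~\ref{prop:uniqueNewtonPolygons} gives $\Delta(f_{j+1}(z_1,\dots,z_j,x_{j+1}))=\Delta_{(w_1,\dots,w_j)}(f_{j+1})$ and Lemma~\ref{lem:neukirch} produces a root $z_{j+1}$ with $\nu(z_{j+1})=w_{j+1}$, necessarily nonzero. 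The base case $j=0$ is just picking $w_1\in\Lambda(f_1)$.

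\textbf{$(2)\Rightarrow(1)$.} Here I would use the standing hypothesis of this section (cf.\ Algorithm~\ref{alg:tropicalPoint0}) that $V(F)$ is a non-empty zero-dimensional subset of $(K^\ast)^n$. For the lower bound $\dim Q_k\ge 0$: any $z\in V(F)$ gives $\nu(z_1,\dots,z_k)\in\Trop(\langle f_1,\dots,f_k\rangle)\subseteq Q_k$, so $Q_k\neq\emptyset$, hence $\dim\bigcap_{i=1}^k\Trop(f_i)\ge n-k$. The upper bound $\dim Q_k=0$ I would prove by downward induction on $k$ starting from $k=n$, the base being $Q_n=\Trop(\langle F\rangle)=\overline{\nu(V(F))}$, a finite set. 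For the step, assume $\dim Q_{k+1}=0$. If $f_{k+1}$ involves at least two distinct powers of $x_{k+1}$, then $\Delta_v(f_{k+1})$ has a bounded edge for every $v$, so the projection $Q_{k+1}\to Q_k$ is surjective and $\dim Q_k\le\dim Q_{k+1}=0$. Otherwise $f_{k+1}=g\cdot x_{k+1}^c$ with $g\in K[x_1,\dots,x_k]$ and $c\ge 1$, whence $\Trop(f_{k+1})=\Trop(g)$ and $\bigcap_{i=1}^{k+1}\Trop(f_i)=(Q_k\cap\Trop(g))\times\RR^{n-k}$, which has dimension $\ge n-k$ when non-empty and is empty otherwise, in neither case equal to $n-k-1$; this contradicts $\dim Q_{k+1}=0$, so this case cannot occur.

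\textbf{``Moreover'' and the main difficulty.} Granting~(1), for $w=(w_1,\dots,w_{k-1})$ with $w_i\in\Lambda_{(w_1,\dots,w_{i-1})}(f_i)$ for all $i$ I would show $(w_1,\dots,w_i)\in Q_i$ by induction on $i$: if $(w_1,\dots,w_{i-1})\in Q_{i-1}$, then finiteness of the fibre of $Q_i$ over it (from $\dim Q_i=0$) forces, by the reformulated lemma, that $f_i$ has a unique Newton polygon there and that this fibre equals $\Lambda_{(w_1,\dots,w_{i-1})}(f_i)\ni w_i$. In particular $(w_1,\dots,w_{k-1})\in Q_{k-1}$, and one more application gives a unique Newton polygon of $f_k$ at $(w_1,\dots,w_{k-1})$. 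I expect the real work to be concentrated in the reformulated lemma --- pinning down that $\initial_{(v,w_k)}(f_k)=\sum_{i\in S}\initial_v((f_k)_i)\,x_k^i$ and that a non-monomial initial form at a vertex of $\Delta_v(f_k)$ produces a whole interval of bad weights --- but this is precisely Lemma~\ref{lem:tropical}, so what remains are the two routine inductions above, with the standing hypotheses on $V(F)$ disposing of degenerate cases such as a monomial $f_i$.
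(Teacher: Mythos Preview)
Your proof is correct. For $(1)\Rightarrow(2)$ and the ``moreover'' clause you follow the paper's line exactly: iterate Lemma~\ref{lem:tropical} down the chain $Q_1,Q_2,\ldots$ to obtain unique Newton polygons at each stage, and then lift any $w\in Q_n$ to a root of $F$ via Proposition~\ref{prop:uniqueNewtonPolygons} and Lemma~\ref{lem:neukirch}. The paper packages the same induction slightly differently (it states the unique-Newton-polygon property and the lifting simultaneously rather than as two passes), but the content is identical.

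For $(2)\Rightarrow(1)$ you take a genuinely different route. The paper argues the contrapositive: assuming for simplicity $\dim(\Trop(f_1)\cap\Trop(f_2))=n-1$, it locates a slab $\{\lambda\}\times[\mu_1,\mu_2]\times\RR^{n-2}\subseteq\Trop(f_1)\cap\Trop(f_2)$ and then asserts $\{\lambda\}\times[\mu_1,\mu_2]\times\{(0,\dots,0)\}\subseteq\bigcap_{i=1}^n\Trop(f_i)$, concluding that the full intersection is positive-dimensional. You instead run a clean downward induction: from $\dim Q_n=0$ (tropical-basis hypothesis plus the standing finiteness of $V(F)\subseteq(K^\ast)^n$) you get $\dim Q_k\le\dim Q_{k+1}$ by showing the projection $Q_{k+1}\to Q_k$ is surjective whenever $f_{k+1}$ involves at least two powers of $x_{k+1}$, and you rule out the degenerate case $f_{k+1}=g\cdot x_{k+1}^c$ because it would force $Q_{k+1}$ to carry an $\RR$-factor. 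Your version actually buys something: the paper's ``consequently'' step is not justified as written, since there is no reason the fixed tail $(0,\dots,0)$ lies on $\Trop(f_i)$ for $i\ge 3$ (e.g.\ take $n=3$, $f_1=x_1-1$, $f_2=x_2-x_1+c$ with $\nu(c)=0$, $f_3=x_3^2-x_2$; then $(0,\mu,0)\in\Trop(f_3)$ only for $\mu=0$). Your surjectivity argument is precisely the right repair---it amounts to letting the tail coordinates vary with $\mu$ rather than freezing them at $0$.
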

\begin{proof}
  We first show that $(1)$ implies that $F$ is a tropical basis and that it admits a tree of unique Newton polygons.
  By definition, we have $\Trop(f_1)=\bigcup_{w_1\in\Lambda(f_1)}\{w_1\}\times\RR^{n-1}$. Applying Lemma~\ref{lem:tropical} repeatedly, we see that for $w_1\in\Lambda(f_1)$, the polynomial $f_2$ has a unique Newton polygon at $w_1$ with
  \[ \left(\{w_1\}\times\RR^{n-1}\right) \cap \Trop(f_2) = \{w_1\} \times \bigcup_{w_2\in\Lambda_{w_1}(f_2)} \{w_2\} \times\RR^{n-2}, \]
  and, for $w_1\in\Lambda(f_1)$ and $w_2\in\Lambda_{w_1}(f_2)$, $f_3$ has a unique Newton polygon at $(w_1,w_2)$ with
  \[ \left(\{(w_1,w_2)\}\!\times\!\RR^{n-2}\right) \cap \Trop(f_3) = \{(w_1,w_2)\} \times \bigcup_{w_3\in\Lambda_{(w_1,w_2)}(f_3)} \{w_3\} \times\RR^{n-3}, \]
  and so forth. This shows on the one hand that $F$ admits a tree of unique Newton polygons and on the other hand that any point in $\bigcap_{i=1}^n \Trop(f_i)$ corresponds to the component-wise valuation of a point in $V(F)$, implying that $F$ is a tropical basis.

  It remains to show that if $(1)$ is not true, then $F$ is no tropical basis. Assume for the sake of simplicity that $\dim \Trop(f_1)\cap\Trop(f_2) = n-1$. Because $\Trop(f_1)=\bigcup_{w_1\in\Lambda(f_1)}\{w_1\}\times\RR^{n-1}$ and $\Trop(f_2)$ is invariant under translation by $\{(0,0)\}\times\RR^{n-2}$, there necessarily exist
  \[ \{\lambda\}\times [\mu_1,\mu_2] \times \RR^{n-2}\subseteq \Trop(f_1)\cap\Trop(f_2), \]
  for $\lambda\in\Lambda(f_1)$ and a nontrivial $[\mu_1,\mu_2]\subseteq\RR$. Consequently,
  \[ \{\lambda\}\times[\mu_1,\mu_2]\times\{(0,\dots,0)\}\subseteq \bigcap_{i=1}^n \Trop(f_i), \]
  and since $\bigcap_{i=1}^n \Trop(f_i)$ is not zero-dimensional, $F$ cannot be a tropical basis of the zero-dimensional ideal it generates.
\end{proof}

From Proposition~\ref{prop:uniqueNewtonPolygons2}, we conclude that a generic triangular set is a tropical basis and admits a tree of unique Newton polygons in the following sense:

\begin{corollary}\label{cor:tropical}
  Let $(K^\ast)^N\subseteq K[x]^n$ be the coefficient space of all triangular sets with fixed support. Then, in the topology induced by the valuation, there exists an open dense set $\mathcal U\subseteq (K^\ast)^N$ such that any triangular set $F\in\mathcal U$ is a tropical basis and admits a tree of unique Newton polygons.
\end{corollary}
\begin{proof}
  Consider the component-wise valuation $\nu:(K^\ast)^N\rightarrow\RR^N$. There exists an euclidean open dense subset $U\subseteq\RR^n$ such that the tropical hypersurfaces of any triangular set $F\in (K^\ast)^N$ with $\nu(F)\in U$ intersect transversally as in Proposition~\ref{prop:uniqueNewtonPolygons2} (1). As $\nu$ is continuous, its preimage $\mathcal U:=\nu^{-1}U\subseteq (K^\ast)^N$ is also open and dense.
\end{proof}

\section{Computing tropical starting points}

In this section, we use Algorithm~\ref{alg:tropicalPoint0} to compute points on higher-dimensional tropical varieties. This is done by reducing the dimension to zero by intersecting with randomly chosen hyperplanes.
Moreover, we will use the algorithm to sample random maximal Gr\"obner cones on the tropical Grassmannians $\mathcal G_{3,7}, \mathcal G_{4,7}, \mathcal G_{3,8}, \mathcal G_{4,8}$ and show that the latter two are not simplicial.


\begin{proposition}\label{prop:affineSubspaceIntersection}
  Let $I \unlhd K[x]$ be a prime ideal of dimension $d$ and $X=V(I)$ its corresponding irreducible affine variety such that $X \cap (K^\ast)^n \neq \emptyset$. W.l.o.g. let $\{x_1,\dotsc,x_d\}$ be algebraically independent modulo $I$.
  Then there exists a non-empty, Zariski open subset $U\subseteq (K^\ast)^{d}$ such that for all $\lambda\in U$
  \[ \emptyset\neq X\cap V(\langle x_i-\lambda_i\mid i=1,\dots,d\rangle) \subseteq (K^\ast)^n \]
  and $\dim(X\cap V(\langle x_i-\lambda_i\mid i=1,\dots,d\rangle)=0$.
\end{proposition}
\begin{proof}
  Abbreviating $H_\lambda:=V(\langle x_i-\lambda_i\mid i=1,\dots,d\rangle)$,
  it is clear that there exists a Zariski open $U_0\subseteq (K^\ast)^{d}$ with $\emptyset\neq X\cap H_\lambda$ and $\dim(X\cap H_\lambda) = 0$.
  Now consider the set in which the inclusion does not hold. It naturally decomposes into $n-d$ subsets:\vspace{-1mm}
  \[ \vspace{-1mm} A:=\{\lambda\in (K^\ast)^d\mid X\cap H_\lambda\!\nsubseteq\! (K^\ast)^n \}
    =\!\bigcup_{i=d+1}^n\! \underbrace{\{ \lambda\in (K^\ast)^d\mid \exists z\!\in\! X\cap H_\lambda\!: z_i\!=\!0 \}}_{=:A_i}.\]
  As $U$ can be chosen to be $U_0 \setminus \overline A$, where $\overline{(\cdot)}$ denotes the Zariski closure in $(K^\ast)^d$,  it suffices to show that $\overline A_i \neq (K^\ast)^d$. This is easy to see: Because $X$ is irreducible and $X\cap (K^\ast)^n\neq\emptyset$, we necessarily have $\dim (X\cap V(x_i))<d$ for all $i=d+1,\dots,n$. In particular, $\dim \pi(X\cap V(x_i))<d$, where $\pi: K^n\twoheadrightarrow K^d$ is the canonical projection onto the first $d$ coordinates. And, by construction, $A_i\subseteq \pi(X\cap V(x_i))$.
\end{proof}

Proposition~\ref{prop:affineSubspaceIntersection} can be reformulated into the following algorithm.

\begin{algorithm}[tropical point]\label{alg:tropicalStartingPoint}\
  \begin{algorithmic}[1]
    \REQUIRE{$I\unlhd K[x]$ prime ideal with $V(I)\cap(K^\ast)^n\neq\emptyset$.}
    \ENSURE{$w\in\Trop(I)$.}
    \STATE Compute a maximal algebraically indep. set modulo $I$, say $\{x_1,\dots,x_d\}$.
    \REPEAT
    \STATE Pick $z=(z_1,\dots,z_d)\in (K^\ast)^d$ randomly.\label{algstep:random}
    \STATE Set $I_z$ to be the image of $I$ under the substitution map\vspace{-1.5mm}
    \[\vspace{-1.5mm} K[x_1,\dots,x_n]\rightarrow K[x_{d+1},\dots,x_n],\quad x_i\mapsto
    \begin{cases}
      z_i& \text{if }i\leq d,\\
      x_i& \text{else}.
    \end{cases} \]
    \UNTIL{$\dim(I_z)=0$ and $V(I_z)\subseteq (K^\ast)^{n-d}$}
    \STATE Compute a triangular set $F\subseteq K[x_{d+1},\dots,x_{n}]$ with $\sqrt{I_z} \subseteq \langle F\rangle$.
    \STATE Compute a point $(w_{d+1},\dots,w_n)\in\Trop(F)$ using Algorithm~\ref{alg:tropicalPoint0}.
    \RETURN{$(\nu(z),w_{d+1},\dots,w_n)$}
  \end{algorithmic}
\end{algorithm}

\begin{remark}
  \begin{enumerate}
  \item
    Randomized algorithms such as Algorithm~\ref{alg:tropicalStartingPoint} are commonly referred to as Las Vegas algorithms. This means that its result is always correct, however it only has an expected finite runtime. Nevertheless, Proposition~\ref{prop:affineSubspaceIntersection} shows that generic choices of $z$ in Step~\ref{algstep:random} will lead to termination.
  \item
    Note that the set of all $w\in\RR^d$ such that $\{w\}\times\RR^{n-d}$ does not intersect any lower-dimensional Gr\"obner polyhedra on $\Trop(I)$ is open and dense in the euclidean topology. Hence generic choices of $z\in(K^\ast)^d$ in Step~\ref{algstep:random} will also guarantee that the resulting tropical point will lie in the relative interior of a maximal Gr\"obner polyhedra on the tropical variety.
  \item
    It is possible to eliminate the randomness by computing stable intersections with affine hyperplanes, as in a recent work of Jensen and Yu \cite{JY16}. However, this requires one transcendental extension of $K$ per hyperplane, which is not feasible in high codimension.
  \end{enumerate}
\end{remark}

We will briefly define the examples of our interest.

\begin{definition}\label{def:grassmannians}
  Let $k,n \in \NN_{>0}$ with $1 \leq k \leq n$. The \emph{tropical Grassmannian} $\mathcal G_{k,n} \subseteq \RR^{{n \choose k}}$ is defined to be the tropicalization of the ideal $\Grass(k,n)\unlhd K[p]$, where the variables of the ring $K[p]:=K[p_{i_1\cdots i_k}\mid 1\leq i_1<\dots<i_k\leq n]$ represent the $k\times k$ minors of any $k\times n$ matrix and the ideal $\Grass(k,n)$ is generated by all Pl\"ucker relations amongst them, see \cite[Section 4.3]{MaclaganSturmfels}. We consider the variables of $K[p]$ to be sorted lexicographically, i.e.
  \[ p_{i_1\cdots i_k} > p_{j_1\cdots j_k} \quad:\Longleftrightarrow\quad \exists 1\leq l<k\!: i_1=j_1, \dots, i_{l-1}=j_{l-1} \text{ and } i_l > j_l.\]
  Moreover, we define the ideal $\Det(k,n)\unlhd K[x_{11},x_{12},\dots,x_{nn}]$ to be the ideal generated by the $k\times k$ minors of the matrix $(x_{ij})_{i,j=1,\dots,n}$.
\end{definition}

\begin{example}[$\mathcal G_{2,5}$]\label{ex:firstGrassmann}
  Let $K = \CC\{\!\{t\}\!\}$. We demonstrate Algorithm~\ref{alg:tropicalStartingPoint} on the tropical Grassmannian $\mathcal G_{2, 5}$. Its ideal is given by
  \begin{align*}
    \Grass(2,5) = \langle  & p_{34}p_{25}\! - \!p_{24}p_{35}\! + \!p_{23}p_{45}, p_{34}p_{15}\! - \!p_{14}p_{35}\! + \!p_{13}p_{45}, p_{24}p_{15}\! - \!p_{14}p_{25}\! + \!p_{12}p_{45}, \\
    & p_{23}p_{15}\! - \!p_{13}p_{25}\! + \!p_{12}p_{35}, p_{23}p_{14}\! - \!p_{13}p_{24}\! + \!p_{12}p_{34} \rangle\unlhd K[p_{12},p_{13},\dots,p_{45}].
  \end{align*}
  It is $7$-dimensional with maximal independent set $\{p_{15},p_{23},p_{24},\dots,p_{45}\}$.

  Choosing $(z_{15},z_{23},z_{24},\dots,z_{45}):=(t,\dots,t)$ yields $I_z = \langle p_{12},p_{13},p_{14} \rangle$, which means that the choice is not generic in the sense of Proposition~\ref{prop:affineSubspaceIntersection}. On the other hand choosing $(t, t^5, t^3, t^7, t^8, t^2, t^9)$  yields $I_z$ generated by the triangular set
  \[ p_{12} + (t^3 - 1),\quad t^6\cdot p_{13} + (t^7 - 1), \quad t^2\cdot p_{14} + (t^4 - 1). \]
  Looking at the Newton polygons, we conclude that $w:=(0,-6,-2)\in\Trop(I_z)$. Thus
  $(\nu(z),w)=(0, -6, -2, 1,5,3,7,8,2,9) \in \Trop(I)$.
\end{example}

In addition to computing starting points for the tropical traversals, Algorithm~\ref{alg:tropicalStartingPoint} can be used to sample random points on tropical varieties.

\begin{example}[$\mathcal G_{k, n}$ for $k\in\{3,4\}$ and $n\in\{7,8\}$]
  Using Algorithm~\ref{alg:tropicalStartingPoint}, we sampled random maximal cones on higher tropical Grassmannians ignoring symmetry. This was done by computing Gr\"obner cones around random tropical points, dismissing those of lower dimension and duplicates. We analyzed over $1000$ distinct maximal cones on each of $\mathcal G_{3, 7}, \mathcal G_{4, 7}$ and $\mathcal G_{3, 8}$, as well as over $100$ distinct maximal cones on the tropical variety of $\mathcal G_{4, 8}$.

  All cones were invariant under tensoring with $\mathbb F_2$, which is not surprising for $\mathcal G_{3,7}$: Even though Speyer and Sturmfels showed that $\mathcal G_{3,7}$ depends on the characteristic of the ground field, in fact it is the smallest tropical Grassmannian depicting this behavior \cite[Theorem 3.7]{SpeyerSturmfels}, Herrmann, Jensen, Joswig and Sturmfels showed that, out of the $252\,000$ maximal cones of $\mathcal G_{3,7}$, this is only visible on a single cone, the Fano cone \cite[Theorem 2.1]{Herrmann2009}.

  Of the $1000$ Gr\"obner cones sampled from each of $\mathcal G_{3,7}$ and $\mathcal G_{4,7}$, every single one was simplicial, which was expected as $\mathcal G_{3,7}$ is known to be simplicial \cite[Theorem 2.1]{Herrmann2009} and $\mathcal G_{4,7}=\mathcal G_{3,7}$ by duality. In the $1000$ and $100$ Gr\"obner cones sampled from $\mathcal G_{3,8}$ and $\mathcal G_{4,8}$ respectively, each contained exactly one cone which was not simplicial, see the proof of Theorem~\ref{thm:simpliciality}.

  Not much is known on $\mathcal G_{3,8}$ and $\mathcal G_{4,8}$, but there is a complete description of the \emph{Dressian} $\mathcal D_{3,8}$ by Herrmann, Joswig and Speyer \cite[Theorem 31]{Herrmann2014}, which is a natural tropical prevariety containing $\mathcal G_{3,8}$ that parametrizes all tropical linear spaces. It is known that all rays of $\mathcal D_{3,7}$ and $\mathcal D_{3,8}$ are also rays of $\mathcal G_{3,7}$ and $\mathcal G_{3,8}$ respectively, and that $\mathcal G_{3,7}$ contains rays which are not rays of $\mathcal D_{3,7}$. Our sampling also revealed that this holds for $\mathcal G_{3,8}$. In fact, none of the $126$ tested rays of $\mathcal G_{3,8}$ were rays of $\mathcal D_{3,8}$, a concrete example is the ray generated by the following vector:
  \begin{align*}
    (&0, -1, 1, 1, -1, 0, 0, 0, 0, 1, 0, 1, 1, 1, 1, 1, 0, 0, 1, 1, -2, 0, 0, -1, -1, 0, -1, -1,\\[-0.175cm]
     &0,-1,0, 0, 0, 0, 0, 0, 1, 1, 1, 1, 1, 0, 0, 1, 1, 0, 0, 1, 1, 0, 0, 0, 0, 0, 0, 0)\in\RR^{\binom{8}{3}}.
  \end{align*}
  This is somewhat in stark contrast to $\mathcal G_{3,7}$ and $\mathcal D_{3,7}$, as out of the $721$ rays of the Grassmannian $616$ were rays of the Dressian \cite[Theorem 2.2]{Herrmann2009}.
\end{example}

As an immediate result, we obtain:

\begin{theorem}\label{thm:simpliciality}
  The tropical Grassmannian $\mathcal G_{d,n}$ is not simplicial for $d=3,4$ and $n=8$.
\end{theorem}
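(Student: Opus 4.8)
The plan is to prove the theorem by explicit computation, producing for each of $\mathcal{G}_{3,8}$ and $\mathcal{G}_{4,8}$ one maximal cone that is not simplicial; since a polyhedral fan is simplicial precisely when all of its maximal cones are, a single such cone in each case suffices.

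First I would fix the relevant dimensions. Being a subcomplex of the Gr\"obner complex of the Pl\"ucker ideal $I$ of $\Grass(k,n)$, the fan $\mathcal{G}_{k,n}$ is pure of dimension $\dim I = k(n-k)+1$ and contains the $n$-dimensional lineality space induced by the scaling action of the torus on the Pl\"ucker coordinates. Hence each maximal cone, viewed modulo this lineality space, has dimension $k(n-k)+1-n$, which equals $8$ for $(k,n)=(3,8)$ and $9$ for $(k,n)=(4,8)$; such a cone is simplicial exactly when its number of extreme rays modulo lineality equals this dimension. The task therefore reduces to exhibiting a maximal cone of $\mathcal{G}_{3,8}$ with more than $8$ rays modulo lineality and one of $\mathcal{G}_{4,8}$ with more than $9$.

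Next I would run Algorithm~\ref{alg:tropicalStartingPoint} over $K=\CC\{\!\{t\}\!\}$ to sample non-trivial points $w$ on $\mathcal{G}_{3,8}$ and on $\mathcal{G}_{4,8}$. For each such $w$, a Gr\"obner basis of $I$ with respect to a term order refining $w$ yields the Gr\"obner cone $C$ having $w$ in its relative interior; one verifies from $\initial_w(I)$ that $C\subseteq\mathcal{G}_{k,n}$ and that $\dim C = k(n-k)+1$, so that $C$ is genuinely a maximal cone, and then one reads off its extreme rays, its lineality space and its facet incidences using the interfaces to \textsc{gfanlib} and \textsc{polymake}. Iterating over the sampled points, I would then single out for each of the two Grassmannians one cone whose number of extreme rays modulo lineality strictly exceeds the expected dimension, and record its rays and facet data as an explicit certificate; this is precisely the information collected in the facet statistics of the sampled cones.

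The principal obstacle is computational: $\Grass(4,8)$ has $\binom{8}{4}=70$ Pl\"ucker coordinates, so both the Gr\"obner basis computation and the ensuing convex-hull computations are expensive, and one must make sure that the randomly chosen weight vector lies in the relative interior of a cone of full dimension $k(n-k)+1$ rather than on a lower-dimensional wall, since on a wall the extracted cone is not maximal and its ray count is uninformative. A lesser but unavoidable subtlety is to keep track of the $n$-dimensional lineality space consistently throughout, because the notion of being simplicial only makes sense for $\mathcal{G}_{k,n}$ modulo lineality.
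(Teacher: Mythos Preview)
Your proposal is correct and follows essentially the same approach as the paper: sample non-trivial points on $\mathcal G_{3,8}$ and $\mathcal G_{4,8}$ via Algorithm~\ref{alg:tropicalStartingPoint}, compute the surrounding Gr\"obner cones, and exhibit one maximal cone in each whose ray count modulo lineality exceeds its dimension. The paper simply records the explicit witnesses $w_{3,8}\in\RR^{\binom{8}{3}}$ and $w_{4,8}\in\RR^{\binom{8}{4}}$ together with the resulting cone data (dimension $16$ with $9$ rays and $8$-dimensional lineality, respectively dimension $17$ with $10$ rays and $8$-dimensional lineality), matching your dimension counts exactly.
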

\begin{proof}
  Consider the following two points which lie on $\mathcal G_{3,8}$ and $\mathcal G_{4,8}$ respectively:
  \begin{align*}
    w_{3,\!8}=(&2,\!10,\!7,\!10,\!2,\!2,\!2,\!10,\!7,\!10,\!9,\!6,\!9,\!12,\!12,\!12,\!9,\!6,\!9,\!12,\!5,\!2,\!5,\!5,\!2,\!5,\!5,\!2,\!5,\!1,\\[-1.5mm]
               &7,\!7,\!7,\!7,\!7,\!9,\!6,\!9,\!12,\!12,\!12,\!9,\!6,\!9,\!12,\!9,\!7,\!9,\!12,\!7,\!7,\!7,\!7,\!7,\!7,\!7)\in\RR^{\binom{8}{3}},\!\\[1mm]
    w_{4,\!8}=(&2,\!1,\!8,\!1,\!8,\!14,\!21,\!15,\!21,\!11,\!14,\!11,\!21,\!16,\!21,\!5,\!12,\!6,\!12,\!4,\!5,\!4,\!12,\!7,\!12,\!17,\!12,\\
               &17,\!19,\!19,\!19,\!17,\!12,\!17,\!19,\!14,\!21,\!15,\!21,\!11,\!14,\!11,\!21,\!16,\!21,\!21,\!15,\!21,\!21,\!19,\\[-1.5mm]
               &21,\!21,\!16,\!21,\!19,\!17,\!12,\!17,\!19,\!19,\!19,\!17,\!12,\!17,\!19,\!19,\!19,\!19,\!19,\!19)\in\RR^{\binom{8}{4}}.
  \end{align*}
  A corresponding reduced Gr\"obner basis of $\Grass(3,8)$ under the weighted monomial ordering with weight vector $w_{3,8}$ and lexicographical tiebreaker has $686$ elements of degrees ranging from $2$ to $6$, while the reduced Gr\"obner basis for $\Grass(4,8)$ has $1157$ elements of degrees ranging from $2$ to $8$.

  The Gr\"obner cone containing $w_{3,8}$ in its relative interior is of dimension $16$, generated by $9$ rays and a lineality space of dimension $8$, and the Gr\"obner cone with $w_{4,8}$ in its relative interior is of dimension $17$, generated by $10$ rays and a lineality space of dimension $8$.
  Hence both cones are maximal-dimensional in their respective tropical varieties and not simplicial.
\end{proof}

We conclude the section with some timings.

\begin{timings}
  Figure~\ref{fig:timings} compares three different algorithms for computing points on tropical varieties:
  \begin{description}
  \item[\textsc{gfan} 0.6.2] an experimental algorithm based on Chan's work on tropical curves \cite[Chapter 4]{Chan13}, and Jensen and Yu's work on stable intersections~\cite{JY16}.
  \item[\textsc{gfan} 0.5] \cite[Algorithm~9]{BJSST07}, a random traversal of the Gr\"obner fan while testing all rays for containment in the tropical variety. It can also be found in \textsc{Singular}, however that implementation is slower than \textsc{gfan}.
  \item[\textsc{Singular} 4.1.0] Algorithm~\ref{alg:tropicalStartingPoint}, as implemented in \texttt{tropicalNewton.lib}. As \textsc{gfan} additionally computes a corresponding reduced Gr\"obner basis, we also provide analogous timings in \textsc{Singular}.
  \end{description}
  We would like to stress that these timings merely serve as a comparison of the algorithms and not as a showcase of the computational reach of the two systems involved. For instance, points on tropical Grassmannians can also be computed via the tropical Stiefel map, see \cite[Proposition 12]{Herrmann2014} and~\cite{Fink2015}. In fact, $\mathcal G_{3,7}$ has been previously computed using \textsc{gfan 0.4}, which required 25 hours \cite[Theorem 2.2]{Herrmann2009}. Currently, \textsc{gfan 0.6.2} requires 65 minutes, while \textsc{Singular 4.1.0} requires 10 minutes.

  All computations were run on a machine with Intel E5-2643v3 (3.4 GHz) processors running Gentoo 4.4.6 and were aborted after exceeding 7 CPU days. See Definition~\ref{def:grassmannians} for the definitions of $\Det(k,n)$ and $\Grass(k,n)$. All examples are considered over $\CC\{\!\{t\}\!\}$.
  \begin{figure}[ht]
    \centering
    \begin{tabular}{|l|R{3cm}R{3cm}R{25mm}R{25mm}|}
      \hline
      &\textsc{Gfan 0.6.2}&\textsc{Gfan 0.5\phantom{.0}}&\multicolumn{2}{c|}{\textsc{Singular 4.1.0}}\\
      & & &$w\in\Trop(I)$&GB under $>_w$\\
      $\Det(2,5)$&1$\qquad$&1$\qquad$&1$\qquad\;$&1$\qquad\quad$\\
      $\Det(3,5)$&140$\qquad$&7$\qquad$&1$\qquad\;$&1$\qquad\quad$\\
      $\Det(2,6)$&5$\qquad$&1$\qquad$&1$\qquad\;$&1$\qquad\quad$\\
      $\Det(3,6)$&1800$\qquad$&900$\qquad$&8$\qquad\;$&1$\qquad\quad$\\
      $\Det(4,6)$&-$\qquad$&1100$\qquad$&41$\qquad\;$&1$\qquad\quad$\\
      $\Det(5,6)$&-$\qquad$&100$\qquad$&7$\qquad\;$&1$\qquad\quad$\\
      $\Grass(3,7)$&140$\qquad$&-$\qquad$&1$\qquad\;$&1$\qquad\quad$\\
      $\Grass(3,8)$&-$\qquad$&-$\qquad$&3$\qquad\;$&1$\qquad\quad$\\
      $\Grass(3,9)$&-$\qquad$&-$\qquad$&19$\qquad\;$&12$\qquad\quad$\\
      $\Grass(4,7)$&170$\qquad$&-$\qquad$&1$\qquad\;$&1$\qquad\quad$\\
      $\Grass(4,8)$&-$\qquad$&-$\qquad$&9$\qquad\;$&3$\qquad\quad$\\
      $\Grass(4,9)$&-$\qquad$&-$\qquad$&230$\qquad\;$&900$\qquad\quad$\\
      $\Grass(5,8)$&-$\qquad$&-$\qquad$&3$\qquad\;$&1$\qquad\quad$\\
      \hline
    \end{tabular}\vspace{-2mm}
    \caption{Timings in seconds, '-' were aborted after 7 CPU days.}
    \label{fig:timings}
  \end{figure}
\end{timings}

\begin{timings}\label{ex:fieldExtensionCont}
  Consider the computation of tropical points for the family of one-dimensional ideals generated by $F = \{f_1,\dotsc,f_n\} \subseteq \QQ[x_0,x_1,\dots,x_n]\subseteq\CC\{\!\{t\}\!\}[x_0,x_1,\dots,x_n]$ given by
  \[ f_k = x_k^2 - q_k x_0\Big( \sum_{i=1}^{k-1}x_i \Big) + q_k x_0^2, \text{ where }q_k\in\NN\text{ is the $k$-th prime}. \]
  While substituting $x_0\mapsto t$ directly yields triangular sets, we described in Example~\ref{ex:fieldExtension} how our Algorithm~\ref{alg:tropicalPoint0} struggles with them: It requires a degree $2^n$ field extension of $\QQ$, which results in a runtime of $8$ seconds for $n=13$, roughly doubling with each increase of $n$.

  However, for \cite[Algorithm~9]{BJSST07}, this family is completely trivial: As $F$ is already a reduced Gr\"obner basis for a suitable ordering, it is easy to verify that its ideal is one-dimensional and has a one-dimensional homogeneity space generated by $(1,1,\dots,1)\in\RR^{n+1}$. Hence \cite[Algorithm~9]{BJSST07} immediately obtains its tropical variety, which is equal to its homogeneity space. This shows in the runtime of both \textsc{gfan 0.5} and \textsc{gfan 0.6.2}, which terminate instantaneously for $n=13$ and whose runtimes remain under $1$ second for $n<120$.
\end{timings}

\section{Computing tropical links}

In this section, we use Algorithm~\ref{alg:tropicalPoint0} to compute links of a tropical variety around its one-codimensional Gr\"obner polyhedra. This is done in two steps. First we intersect the link with a subspace to reduce it to a one-dimensional polyhedral fan. Afterwards, we intersect the fan with affine hypersurfaces to determine all its rays.

\begin{definition}\label{def:tropicalLink}
  We refer to $\Trop(I)$ as a \emph{tropical link}, if it is a polyhedral fan and has a one-codimensional lineality space.
\end{definition}

\begin{remark}
  Let $u\in\Trop(I)$ sit in the relative interior of a one-codimensional Gr\"obner polyhedra. Then $\Trop(\initial_u(I)\otimes_{\mathfrak K} \mathfrak K\{\!\{t\}\!\})$ is a tropical link which describes $\Trop(I)$ locally around $u$, see Figure~\ref{fig:tropicalLink}. Its lineality space is the linear subspace spanned by the Gr\"obner polyhedra after moving $u$ to the origin.
\end{remark}
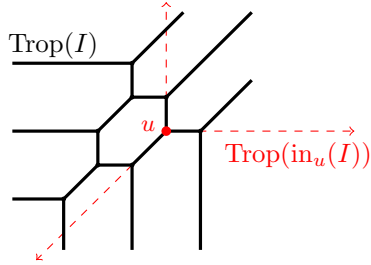
\begin{figure}[ht]
  \centering
  \begin{tikzpicture}[scale=0.9]
    \draw[red,dashed,->]
    (5.75,-0.25) -- (3.85,-2.15);
    \draw[red,dashed,->]
    (5.75,-0.25) -- node[below,xshift=0.5cm,font=\footnotesize] {$\Trop(\initial_u(I))$} (8.5,-0.25);
    \draw[red,dashed,->]
    (5.75,-0.25) -- (5.75,1.65);
    \draw[very thick] (4.25,-1.25) -- (4.25,-2)
    (4.25,-1.25) -- (3.5,-1.25)
    (4.25,-1.25) -- (4.75,-0.75)
    (4.75,-0.25) -- (4.75,-0.75)
    (4.75,-0.25) -- (3.5,-0.25)
    (4.75,-0.25) -- (5.25,0.25)
    (5.25,-0.75) -- (5.25,-2)
    (5.25,-0.75) -- (4.75,-0.75)
    (5.25,-0.75) -- (5.75,-0.25)
    (5.25,0.75) -- (5.25,0.25)
    (5.25,0.75) -- node[above,font=\footnotesize,xshift=-0.25cm,yshift=-0.1cm] {$\Trop(I)$} (3.5,0.75)
    (5.25,0.75) -- (6,1.5)
    (5.75,0.25) -- (5.75,-0.25)
    (5.75,0.25) -- (5.25,0.25)
    (5.75,0.25) -- (7,1.5)
    (6.25,-0.25) -- (6.25,-2)
    (6.25,-0.25) -- (5.75,-0.25)
    (6.25,-0.25) -- (7,0.5);
    \fill (4.25,-1.25) circle (1pt);
    \fill (4.75,-0.25) circle (1pt);
    \fill (4.75,-0.75) circle (1pt);
    \fill (5.25,-0.75) circle (1pt);
    \fill (5.75,0.25) circle (1pt);
    \fill[red] (5.75,-0.25) circle (2pt);
    \node[anchor=base east,font=\scriptsize,red] at (5.75,-0.25) {$u$};
    \fill (5.25,0.75) circle (1pt);
    \fill (5.25,0.25) circle (1pt);
    \fill (6.25,-0.25) circle (1pt);
  \end{tikzpicture}\vspace{-5mm}
  \caption{a tropical link of a tropical cubic curve}
  \label{fig:tropicalLink}
\end{figure}

The reduction to dimension zero relies on the following result on the intersection of tropical varieties by Osserman and Payne. From it, we can immediately write down our algorithm.

\begin{theorem}[{\cite[Theorem 1.1]{OssermanPayne13}}]
  Let $X$ and $X'$ be two affine subvarieties. If $\Trop(X)\cap\Trop(X')$ has codimension $\codim\Trop(X)+\codim\Trop(X')$ in a neighborhood of $w$, then $w$ is contained in $\Trop(X\cap X')$.
\end{theorem}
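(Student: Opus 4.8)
The plan is to first cut the statement down to a fan-theoretic assertion over the residue field by passing to Gr\"obner (initial) degenerations, and then to prove that assertion by intersection theory on a complete toric variety. Since $\Trop$ only records the intersection with the torus $(K^\ast)^n$, I would at once replace $X$ and $X'$ by $X\cap(K^\ast)^n$ and $X'\cap(K^\ast)^n$, noting that $(X\cap X')\cap(K^\ast)^n=\big(X\cap(K^\ast)^n\big)\cap\big(X'\cap(K^\ast)^n\big)$. Now $w\in\Trop(X\cap X')$ says precisely that $\initial_w(I_X+I_{X'})$ is a proper ideal of the Laurent ring $\fK[x_1^{\pm1},\ldots,x_n^{\pm1}]$. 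Invoking the standard description of the local structure of tropical varieties (see \cite[Chapter~3]{MaclaganSturmfels}) --- the star of $\Trop(X)$ at $w$, which records $\Trop(X)$ near $w$, is canonically $\Trop(\initial_w X)$, now formed over $\fK$ with the \textit{trivial} valuation, and likewise for $X'$, while $\initial_w(X\cap X')=V\big(\initial_w(I_X+I_{X'})\big)$ with $\initial_w(I_X+I_{X'})\supseteq\initial_w(I_X)+\initial_w(I_{X'})$ --- it suffices to treat the case where the valuation is trivial, $w=0$, the tropical varieties $\Sigma:=\Trop(X)$ and $\Sigma':=\Trop(X')$ are rational polyhedral fans with $\dim(\Sigma\cap\Sigma')=\dim X+\dim X'-n$ near the origin, and the goal is simply $X\cap X'\neq\emptyset$ (over a trivially valued field, the tropicalization of any non-empty subvariety of the torus contains $0$). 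Passing to irreducible components on which the dimension count persists, I would assume $X$ and $X'$ irreducible.

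To settle this fan-theoretic version, choose a complete rational polyhedral fan $\Delta$ inducing fan structures on both $\Sigma$ and $\Sigma'$, with associated complete toric variety $Y_\Delta$ and dense torus $T$, and let $\overline X,\overline X'\subseteq Y_\Delta$ be the closures of $X,X'$. Refining $\Delta$ if necessary, Tevelev's lemma makes $\overline X$ and $\overline X'$ tropical compactifications, so each meets every torus orbit $O_\sigma$ in the expected codimension; a star computation at each cone of $\Delta$, together with the properness of $\Sigma\cap\Sigma'$, then shows that $\overline X$ and $\overline X'$ meet every $O_\sigma$ --- in particular $O_{\{0\}}=T$ --- in the expected dimension. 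As $Y_\Delta$ is complete and this intersection is dimensionally proper, the refined intersection product $[\overline X]\cdot[\overline X']$ lies in the Chow group of $Y_\Delta$ and is represented by an effective cycle; by the combinatorial description of intersection products on a complete toric variety via the fan displacement rule (Fulton--Sturmfels), this product is computed by the stable tropical intersection of $\Sigma$ and $\Sigma'$, which --- the tropical intersection being proper and containing $0$ --- is a non-zero cycle. Hence $[\overline X]\cdot[\overline X']\neq 0$, so $\overline X\cap\overline X'\neq\emptyset$; the orbit-wise dimension count then forces a component of the intersection to meet $T$, whence $X\cap X'\neq\emptyset$, which is the desired $0\in\Trop(X\cap X')$.

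The step I expect to be the main obstacle is precisely this propagation of transversality through the compactification: one must prevent $\overline X\cap\overline X'$ from escaping entirely into the toric boundary, and one must verify that the numerical transversality of the fans $\Sigma,\Sigma'$ is faithfully reflected by the intersection product of $\overline X$ and $\overline X'$; both points require the careful multiplicity bookkeeping that forms the technical core of Osserman and Payne's argument. An alternative I would keep in reserve is to run the same reduction on Berkovich analytifications, using the surjectivity of the tropicalization map $X^{\mathrm{an}}\twoheadrightarrow\Trop(X)$ together with a fibre-dimension estimate over $w$ to produce a point of $X^{\mathrm{an}}\cap{X'}^{\mathrm{an}}$ lying above $w$.
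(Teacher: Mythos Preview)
The paper does not prove this theorem at all: it is quoted verbatim as \cite[Theorem~1.1]{OssermanPayne13} and used as a black box to justify the two corollaries and Algorithm~\ref{alg:tropicalLink}. There is therefore no ``paper's own proof'' to compare your proposal against.

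That said, your sketch is broadly faithful to the strategy of Osserman and Payne themselves: reduce to the trivially valued case via initial degenerations and stars, choose a complete fan refining the tropical fans, pass to tropical compactifications in the associated toric variety, and use positivity of the intersection product (controlled combinatorially via Fulton--Sturmfels / stable tropical intersection) to force $\overline X\cap\overline X'\neq\emptyset$, with the dimensional transversality keeping the intersection from lying entirely in the boundary. Your self-identified obstacle --- the multiplicity bookkeeping ensuring the intersection does not escape into the toric boundary and that numerical transversality of the fans matches the intersection product --- is exactly where the work in \cite{OssermanPayne13} lies. So your outline is on the right track as a reconstruction of the cited result, but it is not something the present paper undertakes.
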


\begin{corollary}
  Let $\Trop(I)$ be a $d+1$-dimensional tropical link and $H$ its $d$-dimensional lineality space. Suppose $H\cap \Lin(e_{d+1},\dots,e_n) = \{0\}$. Then for any $z\in (K^\ast)^d$ we have
  \[ \Trop(I)\cap (\{\nu(z)\}\times\RR^{n-d}) = \Trop(I+\langle x_i-z_i\mid i=1,\dots,d\rangle), \]
  and $\Trop(I+\langle x_i-z_i\mid i=1,\dots,d\rangle)$ is a one-dimensional tropical link with lineality space $\{0\}$.
\end{corollary}

\begin{corollary}
  Let $\Trop(I)$ be a one-dimensional tropical link with lineality space $\{0\}$. Then for any $z\in K^\ast$ with $\nu(z)\neq 0$ we have
  \[ \Trop(I)\cap (\{\nu(z)\}\times\RR^{n-1}) = \Trop(I+\langle x_1-z\rangle), \]
  and $\Trop(I+\langle x_1-z\rangle)$ is either empty or zero-dimensional.
\end{corollary}

\begin{algorithm}[tropical link]\label{alg:tropicalLink}\
  \begin{algorithmic}[1]
    \REQUIRE{$I\unlhd K[x]$ such that $\Trop(I)$ is a $d+1$-dimensional tropical link with $d$-dimensional lineality space $H$.}
    \ENSURE{$W\subseteq \RR^n$ such that $\Trop(I)=\bigcup_{w\in W} w\cdot\RR_{\geq 0} + H$.}
    \STATE Find $A\subseteq \{1,\dots,n\}$ such that $|A|=n-d$ and $H\cap\Lin(e_i\mid i\in A) = \{0\}$, say $A=\{d+1,\dots,n\}$.
    \STATE Let $J$ be the image of $I$ under the substitution map\vspace{-1.5mm}
    \[ \vspace{-1.5mm}K[x_1,\dots,x_n]\rightarrow K[x_d,\dots,x_n],\quad x_i\mapsto
    \begin{cases}
      1& \text{if }i<d,\\
      x_i& \text{else}.
    \end{cases} \]
    \FOR{$i=d,\dots,n$}
    \STATE Let $J_i^\pm$ be images of $J$ under the maps $x_i\mapsto p^{\pm 1}$ respectively.
    \IF{$J_i^\pm\neq\langle 1\rangle$}
    \STATE Compute $T_i^\pm=\Trop(J_i^\pm)$ using Algorithm~\ref{alg:tropicalPoint0}.\label{step:usingTropicalPoint0}
    \STATE Set\vspace{-1mm}
    \begin{align*}
      W_i^\pm:=&\{(0,\dots,0,w_d,\dots,w_{i-1},\pm 1,w_{i+1},\dots,w_n)\in\RR^n\mid\\
               &\qquad\qquad\qquad\qquad\quad(w_d,\dots,w_{i-1},w_{i+1},\dots,w_n)\in T_i^\pm\}.
    \end{align*}\vspace{-5mm}
    \STATE Scale elements of $W_i^\pm$ positively so that they are primitive in $\ZZ^n$.
    \ENDIF
    \ENDFOR
    \RETURN{$W:=\bigcup_{i=d}^n W_i^\pm$}
  \end{algorithmic}
\end{algorithm}

\begin{remark}[comparison with existing algorithms]\label{rem:comparison}
  The idea of computing tropical links by reducing the dimension is not new. Andrew Chan has designed an algorithm which computes tropical links via projection and reconstruction \cite[Section 4]{Chan13}, based on existing techniques developed by Hept and Theobald \cite{HT09}.

  In both algorithms, the polyhedral computations are timewise irrelevant compared to the polynomial computations, which contain three potential bottlenecks (assuming $K=\CC\{\!\{t\}\!\}$ and the use of \cite[Algorithm~4.7.8]{GreuelPfister} for the triangular decomposition necessary before applying Algorithm~\ref{alg:tropicalPoint0} to $J_i^\pm$):

  \begin{center}
    \begin{minipage}[t]{0.45\linewidth}
      \begin{description}[leftmargin=2mm]
      \item[{\cite[Algorithm 4.2.5, Step 1]{Chan13}}] computing elimination ideals
      \item[{\cite[Algorithm 4.2.14, Step 6]{Chan13}}] computing initial ideals
      \item[{\cite[Algorithm 4.2.14, Step 6]{Chan13}}] computing saturations
      \end{description}
    \end{minipage}%
    \hfill
    \begin{minipage}[t]{0.5\linewidth}
      \begin{description}[leftmargin=2mm]
      \item[({Algorithm~\ref{alg:tropicalLink}, Step~\ref{step:usingTropicalPoint0}})] \ \newline
        computing lexicographical Gr\"obner bases
      \item[({Algorithm~\ref{alg:tropicalLink}, Step~\ref{step:usingTropicalPoint0}})] \ \newline
        computing triangular decompositions
      \item[({Algorithm~\ref{alg:tropicalPoint0}, Step~\ref{step:rootComputation}})] \ \newline
        computing Newton-Puiseux expansions
      \end{description}
    \end{minipage}
  \end{center}

  Experiments suggest that, in both algorithms, the latter two bottlenecks are timewise insignificant compared to the first. In fact, for Algorithm~\ref{alg:tropicalLink}, constructing the triangular decomposition from a lexicographical Gr\"obner basis is polynomial~\cite[Section 7]{Lazard}, as is the construction of the Newton-Puiseux expansion~\cite{Chistov86}. Hence, the main bottleneck in both algorithms lies in the computation of Gr\"obner bases with respect to elimination orderings.

  However, the key difference is that these Gr\"obner basis computations in \cite[Algorithm~4.2.5]{Chan13} involve the one-dimensional input ideal $I$, whereas the ideals $J_i^\pm$ in Algorithm~\ref{alg:tropicalLink} are all zero-dimensional. For these ideals we not only have better complexity bounds \cite{Lazard83}, but also techniques such as \texttt{fglm} \cite{FGLM93}, which speed up our calculations drastically. For instance, in the following Example~\ref{ex:Gr49} and in \textsc{Singular 4.1.0}, a lexicographical Gr\"obner basis of $J_i^\pm$ required only 30 seconds of computation while an elimination ideal of $I$ required 25 minutes.

\end{remark}

\begin{example}[$\mathcal G_{4,9}$]\label{ex:Gr49}
  Let $K=\CC\{\!\{t\}\!\}$ and $I=\Grass(4,9)$. Its tropical variety $\mathcal G_{4, 9} \subseteq \RR^{126}$ is of dimension $21$ with a homogeneity space of dimension $9$.
  Using Algorithm~\ref{alg:tropicalPoint0}, one possible tropical point that lies in the interior of a maximal cone is\vspace{-1.5mm}
  \begin{align*}
    w\!:=\!(&1,\!1,\!3,\!4,\!4,\!7,\!8,\!9,\!9,\!9,\!10,\!10,\!9,\!5,\!10,\!10,\!10,\!4,\!4,\!12,\!13,\!13,\!13,\!13,\!5,\!17,\!18,\!18,\!18,\!18,\\
            &9,\!19,\!19,\!19,\!19,\!8,\!9,\!10,\!10,\!10,\!11,\!11,\!10,\!7,\!11,\!11,\!12,\!12,\!12,\!12,\!11,\!13,\!13,\!13,\!13,\!18,\\
            &\!19,\!19,\!19,\!19,\!10,\!20,\!20,\!20,\!20,\!20,\!20,\!20,\!20,\!20,\!10,\!10,\!10,\!10,\!12,\!13,\!13,\!13,\!13,\!11,\\
            &17,\!18,\!18,\!18,\!18,\!11,\!19,\!19,\!19,\!19,\!18,\!19,\!19,\!19,\!19,\!10,\!20,\!20,\!20,\!20,\!20,\!20,\!20,\!20,\\[-1.5mm]
            &\!20,\!18,\!19,\!19,\!19,\!19,\!11,\!20,\!20,\!20,\!20,\!20,\!20,\!20,\!20,\!20,\!20,\!20,\!20,\!20,\!20,\!20)\in\RR^{\binom{9}{4}}.\vspace{-1.5mm}
  \end{align*}
  The reduced Gr\"obner basis of the initial ideal under $w$ with respect to the reverse lexicographical ordering consists of $5543$ binomials with degrees ranging from $2$ to $7$.
  The Gr\"obner cone $C_w(I)$ is simplicial with its $12$ facets.
  Figure~\ref{fig:GBcomparison} shows some data on the reduced Gr\"obner bases of the saturated initial ideals under weight vectors on the facets of $C_w(I)$. The rows represent binomials, trinomials and quadrinomials respectively and the columns represent degrees $2$ to $7$, i.e. the entry in row $i$ and column $j$ is the number of Gr\"obner basis elements with $i+1$ monomials and of degree $j+1$.

  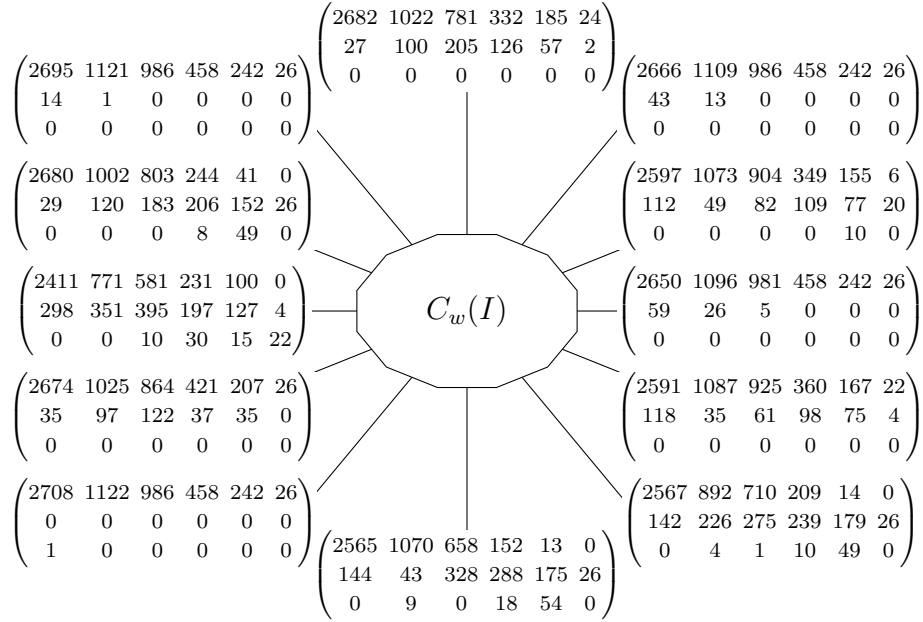
\begin{figure}[ht]
    \centering
    \begin{tikzpicture}[yscale=0.7]
      \draw (0,0) -- (0:5cm)
      (0,0) -- (30:5cm)
      (0,0) -- (60:4.5cm)
      (0,0) -- (90:5cm)
      (0,0) -- (120:4.5cm)
      (0,0) -- (150:5cm)
      (0,0) -- (180:5cm)
      (0,0) -- (210:5cm)
      (0,0) -- (240:4.5cm)
      (0,0) -- (270:5cm)
      (0,0) -- (300:4.5cm)
      (0,0) -- (330:5cm);
      \node[font=\tiny,fill=white,inner sep=0pt,outer sep=1pt] at (4,0) {$\facetstatsTwo$};
      \node[font=\tiny,fill=white,inner sep=0pt,outer sep=1pt] at (4,2) {$\facetstatsOne$};
      \node[font=\tiny,fill=white,inner sep=0pt,outer sep=1pt] at (4,4) {$\facetstatsTen$};
      \node[font=\tiny,fill=white,inner sep=0pt,outer sep=1pt] at (0,5) {$\facetstatsFive$};
      \node[font=\tiny,fill=white,inner sep=0pt,outer sep=1pt] at (-4,4) {$\facetstatsThree$};
      \node[font=\tiny,fill=white,inner sep=0pt,outer sep=1pt] at (-4,2) {$\facetstatsSeven$};
      \node[font=\tiny,fill=white,inner sep=0pt,outer sep=1pt] at (-4,0) {$\facetstatsSix$};
      \node[font=\tiny,fill=white,inner sep=0pt,outer sep=1pt] at (-4,-2) {$\facetstatsEight$};
      \node[font=\tiny,fill=white,inner sep=0pt,outer sep=1pt] at (-4,-4) {$\facetstatsNine$};
      \node[font=\tiny,fill=white,inner sep=0pt,outer sep=1pt] at (0,-5) {$\facetstatsFour$};
      \node[font=\tiny,fill=white,inner sep=0pt,outer sep=1pt] at (4,-4) {$\facetstatsTwelve$};
      \node[font=\tiny,fill=white,inner sep=0pt,outer sep=1pt] at (4,-2) {$\facetstatsEleven$};
      \draw[fill=white]
      (15:1.5cm)
      -- (45:1.5cm)
      -- (75:1.5cm)
      -- (105:1.5cm)
      -- (135:1.5cm)
      -- (165:1.5cm)
      -- (195:1.5cm)
      -- (225:1.5cm)
      -- (255:1.5cm)
      -- (285:1.5cm)
      -- (315:1.5cm)
      -- (345:1.5cm)
      -- cycle;
      \node at (0,0) {$C_w(I)$};
    \end{tikzpicture}\vspace{-5mm}
    \caption{reduced Gr\"obner bases around a maximal cone in $\mathcal G_{4,9}$}
    \label{fig:GBcomparison}
  \end{figure}

  The computation of the $12$ tropical links using Algorithm~\ref{alg:tropicalLink} took $7$ minutes, while all attempts to compute any of the $12$ tropical prevarieties failed to terminate within an hour, even using the newly developed techniques by Jensen, Sommars and Verschelde \cite{JSV17}.  Similarly, computing any of the elimination ideals necessary in \cite[Algorithm~4.2.5]{Chan13} required 25 minutes in \textsc{Singular}. All tropical links are $3$-valent, i.e. each facets is adjacent to exactly three maximal cones in the tropical variety.
\end{example}





\renewcommand{\emph}[1]{\textit{#1}}
\renewcommand*{\bibfont}{\small}
\printbibliography

\end{document}